\numberwithin{equation}{section}
\newtheorem{thm}{Theorem}[section]
\newtheorem{prop}[thm]{Proposition}
\newtheorem{lem}[thm]{Lemma}
\newtheorem{cor}[thm]{Corollary}
\newtheorem{prob}[thm]{Problem}
\newcommand{\cref}[1]{Corollary~\ref{#1}}
\newcommand{\lref}[1]{Lemma~\ref{#1}}
\newcommand{\pref}[1]{Proposition~\ref{#1}}
\newcommand{\tref}[1]{Theorem~\ref{#1}}
\newcommand{\eref}[1]{(\ref{e#1})}
\newcommand{\secref}[1]{Section~\ref{#1}}
\newcommand{\bs}{\backslash}
\newcommand{\vhi}{\varphi}
\newcommand{\img}{\operatorname{Im}}
\newcommand{\mlt}{\operatorname{Mlt}}
\newcommand{\inn}{\operatorname{Inn}}
\newcommand{\atp}{\operatorname{Atp}}
\newcommand{\m}{^{-1}}
\newcommand{\aut}{\operatorname{Aut}}
\newcommand{\id}{\operatorname{id}}
\newcommand{\zbb}{\mathbb Z}
\newcommand{\eps}{\varepsilon}
\newcommand{\Sym}{\operatorname{Sym}}
\newcommand{\inv}{^{-1}}                % inverse
\newcommand{\ldv}{\backslash}           % left division
\newcommand{\rdv}{/}                    % right division
\newcommand{\midq}{;$ $} %{\mid}
\begin{document}
\title{Normality, nuclear squares and Osborn identities}
\author{Ale\v{s} Dr\'{a}pal}
\author{Michael Kinyon$^*$}
\thanks{${}^*$ Partially supported by Simons Foundation Collaboration Grant 359872}
\address{Dept.~of Mathematics \\ Charles University \\ Sokolovsk\'a 83 \\ 186
75 Praha 8, Czech Rep.}
\address{Dept.~of Mathematics \\ University of Denver\\ 2390 S.~York St. \\
Denver, CO 80208, USA}
\email{drapal@karlin.mff.cuni.cz}
\email{mkinyon@du.edu}
\keywords{Loop; Normal subloop; LC loop; Buchsteiner loop; Osborn loop;
nuclear identification}
\subjclass[2010] {Primary 20N05}

\begin{abstract}
Let $Q$ be a loop. If $S\le Q$ is such that $\vhi(S) \subseteq S$
for each standard generator of  $\inn Q$, then $S$ does not have to be
a  normal subloop. In an LC loop the left and middle nucleus coincide and
form a normal subloop. The identities of Osborn loops are obtained
by applying the idea of nuclear identification, and various connections
of Osborn loops to Moufang and CC loops are discussed. Every
Osborn loop possesses a normal nucleus, and this nucleus coincides
with the left, the right and the middle nucleus.
Loops that are both Buchsteiner and Osborn are characterized as loops
in which each square is in the nucleus.
\end{abstract}

\maketitle

Bruck's paper \cite{bruck} can be regarded as the beginning of the systematic theory
of loops. A relatively long development nonwithstanding, there still arise various lacunae
in the foundations. The intent of this paper is to fill some of these.
Standard references for loop theory are \cite{Belousov,bruck_book,pf} and essentially
all uncited claims can be found therein.

\section{Normality and inner mappings}\label{1}
Let $S$ be a subloop of a loop $Q$. It is well known \cite{pf}
that $S$ is normal if and only if $\vhi(S) = S$ for all $\vhi\in \inn Q$, where $\inn Q$
denotes the group of inner mappings. Since $\inn Q$ is closed for
inverses, the condition $\vhi(S) = S$ may be replaced by
$\vhi(S) \subseteq S$. The \emph{standard generators} of $\inn Q$
are $L_{xy}\m L_x L_y$, $R_{yx}\m R_x R_y$ and $L_x\m R_x$,
where $L_x\colon y \mapsto xy$ is the \emph{left translation} of the
element $x\in Q$, and $R_x\colon y \mapsto yx$ is the \emph{right translation}.
Recall that $\inn  Q$ is defined as $\{\vhi\in \mlt Q;$ $\vhi(1) = 1\}$,
where $\mlt Q = \langle L_x,R_x; x \in Q \rangle$.
The question addressed in this section is whether for $S$ to be normal
it suffices to assume that $\vhi(S) \subseteq S$ holds
for all standard generators $\vhi$.

It does not seem to be really surprising that the answer is negative.
Nevertheless, some effort seems to be needed to obtain an example that
is easy to describe. The example presented below was obtained
while investigating the structure of multiplicative equivalences \cite{dau}.

An equivalence $\sim$ upon the loop $Q$ is said to be
\emph{multiplicative} if
\begin{equation}\label{e11}
x\sim y \text{ and } u\sim v \quad \Longrightarrow \quad
xu\sim yv
\end{equation}
holds for all $x,y,u,v \in Q$. If $\sim$ is multiplicative
then $[x]_\sim\cdot [y]_\sim=[xy]_\sim$ is a well defined operation
upon $Q/\!\!\sim$ and
the set $S=[1]_\sim$ is closed under multiplication. However, $S$
is not necessarily closed under (left or right) division.

\begin{prop}\label{11}
Let $\sim$ be a multiplicative equivalence upon a loop $Q$. Put
$S=[1]_Q$  and assume that $xS=Sx=[x]_\sim$
for all $x\in Q$. Then $L_x\m R_x(S) = S$ for all $x\in Q$,
and if $\vhi$ is equal to $L_{xy}\m L_x L_y$ or $R_{yx}\m R_x R_y$,
then $\vhi(S)\subseteq S$, for all $x,y \in Q$.
Furthermore, $S$ is a subloop of $Q$.
\end{prop}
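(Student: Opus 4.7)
My plan is to treat the three claims in order, exploiting only the multiplicativity axiom \eref{11} together with the hypothesis $xS = Sx = [x]_\sim$. The proof amounts to a sequence of direct computations; the key observation is that the assumed coincidence of left and right cosets is exactly what is needed to move $\sim$-membership through the inner mapping factorizations.

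For the generator $L_x\m R_x$: given $s \in S$, we have $sx \in Sx = xS$, so $sx = xs'$ for a unique $s' \in S$, i.e.\ $L_x\m R_x(s) = s' \in S$. Reversing the argument (for any $s' \in S$, $xs' \in xS = Sx$ gives $xs' = s''x$ with $s'' \in S$, so $s' = L_x\m R_x(s'')$) yields surjectivity, hence $L_x\m R_x(S) = S$. For $L_{xy}\m L_x L_y$: if $s \in S$, then $ys \sim y$ and hence $x(ys) \sim xy$ by two applications of \eref{11}, so $x(ys) \in [xy]_\sim = (xy)S$ and $L_{xy}\m L_x L_y(s) \in S$. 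The generator $R_{yx}\m R_x R_y$ is handled symmetrically, using $(sy)x \sim yx$ and $[yx]_\sim = S(yx)$.

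For the subloop claim, $1 \in S$ is immediate and closure under multiplication is simply $s,t \sim 1 \Rightarrow st \sim 1$. For division, if $s,t \in S$ and $sz = t$ holds in $Q$, then \eref{11} applied to $1 \sim s$ and $z \sim z$ gives $z \sim sz = t \sim 1$, so $z \in S$; the equation $ys = t$ is symmetric. Since every step reduces to direct substitution into \eref{11} or the cosetwise hypothesis, I do not anticipate a genuine obstacle; the only conceptual point worth flagging is that $xS = Sx$ is precisely what converts a $\sim$-class membership into an equation whose solution lies in $S$.
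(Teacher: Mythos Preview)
Your proof is correct and, for the inner-mapping claims, follows the same route as the paper: the paper phrases $L_x\m R_x(S)=S$ as the tautology $xS=Sx$ and writes the chain $L_xL_y(S)=x[y]_\sim\subseteq[x]_\sim[y]_\sim\subseteq[xy]_\sim=L_{xy}(S)$, which is exactly your two applications of \eref{11} followed by the coset hypothesis $[xy]_\sim=(xy)S$.

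For the subloop assertion your argument is slightly more elementary than the paper's. The paper deduces $t\bs s\in S$ from the coset hypothesis via $sS=S=tS\Rightarrow L_t\m L_s(S)=S$, whereas you obtain closure under division directly from multiplicativity alone ($1\sim s$ and $z\sim z$ give $z\sim sz=t\sim 1$). Your argument therefore shows that $S=[1]_\sim$ is a subloop for \emph{every} multiplicative equivalence on a loop, independently of the hypothesis $xS=Sx=[x]_\sim$; this is a small bonus the paper's proof does not extract (and in fact runs counter to the informal remark preceding the proposition that $S$ ``is not necessarily closed under division'').
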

\begin{proof} First note that  $L_x\m R_x(S) = S$ is the same as $xS = Sx$.
Furthermore, $L_xL_y(S)=x[y]_\sim \subseteq [x]_\sim [y]_\sim
\subseteq [xy]_\sim = L_{xy}(S)$ for all $x,y\in Q$. To see that
$S$ has to be a subloop, consider $s,t \in S$. Since $sS = S = tS$,
there has to be $L_t\m L_s(S) = S$, and thus $t\bs s \in S$.
\end{proof}

\begin{cor}\label{12}
Let $\sim$ be a multiplicative equivalence upon a loop $Q$ that
is not a loop congruence. Put $S = [1]_\sim$, and suppose that
$xS = Sx = [1]_\sim$ for all $x\in Q$. Then there exist $x,y \in Q$
such that $L_{xy}\m L_x L_y(S)$ or $R_{yx}\m R_x R_y(S)$ is a proper
subset of $S$.
\end{cor}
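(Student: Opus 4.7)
The plan is to establish the contrapositive of \cref{12}: assuming that $L_{xy}\m L_x L_y(S) = S$ and $R_{yx}\m R_x R_y(S) = S$ hold for all $x,y \in Q$, I will show that $\sim$ must be a loop congruence, contradicting the hypothesis.

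The first step is to promote the assumed equalities from the standard generators to all of $\inn Q$. \pref{11} already gives $L_x\m R_x(S) = S$ for every $x \in Q$, so under the contrapositive hypothesis every standard generator of $\inn Q$ preserves $S$ setwise. The set $H = \{\vhi \in \inn Q : \vhi(S) = S\}$ is closed under composition and under inversion, hence a subgroup of $\inn Q$; since it contains the standard generators, it is all of $\inn Q$. By the classical normality criterion recalled at the beginning of \secref{1}, $S$ is a normal subloop of $Q$.

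The second step converts normality into the required congruence. The hypothesis $xS = Sx = [x]_\sim$ identifies the $\sim$-classes with the two-sided cosets of the normal subloop $S$, so $\sim$ coincides with the coset equivalence modulo $S$, which is a loop congruence — the desired contradiction.

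I do not foresee any substantial obstacle; the argument is short. The conceptually crucial observation is that equality at the standard generators — unlike mere containment — propagates to all of $\inn Q$, because the setwise stabiliser of $S$ in $\inn Q$ is a subgroup, whereas the analogous collection under containment is only a submonoid. This is precisely why \cref{12} is not vacuous and why it can be used to manufacture a subloop that is preserved by every standard generator yet fails to be normal.
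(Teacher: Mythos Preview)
Your argument is correct and is essentially the paper's own proof, just with the implicit step made explicit: the paper simply asserts that if every standard generator satisfies $\vhi(S)=S$ then $S\unlhd Q$, whereas you spell out that the setwise stabiliser of $S$ in $\inn Q$ is a subgroup and hence contains all of $\inn Q$ once it contains the generators. The concluding observation that $[x]_\sim = xS$ identifies $\sim$ with the congruence modulo the normal subloop $S$ is likewise the same as the paper's.
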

\begin{proof} By \pref{11} the latter two sets have to be subsets of $S$.
If both of them are equal to $S$ for all $x,y\in Q$,
then $\vhi(S) = S$ for every standard generator $\vhi$ of $\inn Q$.
In such a case $S\unlhd Q$ and $\sim$ is a loop
congruence.
\end{proof}

Consider now a construction of loops
that generalizes an idea of Bates and Kiokemeister \cite{bk}.
The ingredients for the construction are a binary operation $\cdot$ upon
a set $A$,
a loop $Q$ and injective mappings $f_{a,b}\colon Q\to Q$. The underlying set
is equal to $A\times Q$.

The following statement coincides with
Theorem~4.1 of \cite{dau}.

\begin{prop}\label{13}
Let $\cdot$ be a binary
operation upon a set $A$ and let $1$ be the neutral element
of this operation. Suppose that mappings $x\mapsto ax$
and $x\mapsto xa$ are surjective upon $A$, for all
$a\in A$. Set
\begin{equation}\label{e12}
[c/ b] = \{a\in A;\ ab = c\} \text{ \ and \ }
[a \bs c]=\{b\in A;\ ab = c\},
\end{equation}
for all $a,b,c\in A$. For all $a,b\in A$ let
$f_{a,b}$ be an injective mapping $Q\to Q$, where $Q$
is a loop. Suppose that $f_{a,b}$ is the identity mapping
whenever $1\in \{a,b\}$. The element $(1,1)$ is then a neutral element of
the binary operation that is defined upon $A\times Q$ by
\begin{equation} \label{e13} (a,x)\cdot (b,y) = (ab,f_{a,b}(xy)).
\end{equation}
This operation yields a loop if and only if
\begin{enumerate}
\item[(1)] for all $c,b\in A$ the sets $\img(f_{a,b})$, $a\in[c/b]$,
are pairwise distinct and partition $Q$; and
\item[(2)] for all $a,c \in A$ the sets $\img(f_{a,b})$, $b\in [a\bs c]$,
are pairwise distinct and partition $Q$.
\end{enumerate}
\end{prop}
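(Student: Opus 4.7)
The plan is to characterize the loop property by splitting it into the existence of a two-sided identity and the unique solvability of the two division equations. The identity part is a direct check from the hypothesis that $f_{a,b}=\id$ whenever $1\in\{a,b\}$: evaluating $(1,1)\cdot(b,y)$ and $(b,y)\cdot(1,1)$ with the multiplication rule \eref{13} immediately gives $(b,y)$ in both cases, so I would dispose of this in one line and turn to the substantive part.

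For the division half, I would fix $(b,y)$ and $(c,z)$ and examine when the equation $(a,x)(b,y)=(c,z)$ has a unique solution $(a,x)\in A\times Q$. Unpacking \eref{13}, the first-coordinate equation $ab=c$ restricts $a$ to $[c/b]$, and for any such $a$ the second-coordinate equation $f_{a,b}(xy)=z$ uniquely determines $x$ -- by the injectivity of $f_{a,b}$ together with the loop structure of $Q$ -- provided $z\in\img(f_{a,b})$. Hence unique right division for every choice of $(b,y,c,z)$ is equivalent to the statement that for all $b,c\in A$ each $z\in Q$ lies in exactly one set $\img(f_{a,b})$ with $a\in[c/b]$. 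That is precisely condition~(1); existence of such an $a$ encodes covering of $Q$, uniqueness encodes pairwise disjointness, and pairwise distinctness is then automatic since each image $\img(f_{a,b})$ is nonempty. The analysis of left division, i.e.\ $(b,y)(a,x)=(c,z)$, proceeds in exactly the same way with $[a\bs c]$ in place of $[c/b]$ and yields condition~(2).

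I do not foresee a genuine obstacle; the argument is essentially a translation between the algebraic meaning of ``loop'' and the combinatorial conditions (1)--(2). The one point that merits careful formulation is the role of injectivity versus surjectivity of $f_{a,b}$: injectivity is what forces uniqueness of $x$ once $a$ has been fixed, while the partition requirement in (1) and (2) effectively supplies the missing surjectivity onto the relevant block of $Q$, which is what the loop structure on $A\times Q$ demands.
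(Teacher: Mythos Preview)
Your argument is correct and is the natural one: verify the identity directly, then translate unique solvability of each division equation into the combinatorial partition conditions (1) and (2) via the two-step ``first fix $a$ from $[c/b]$, then solve in $Q$'' procedure, using injectivity of $f_{a,b}$ and the loop property of $Q$ at the second step. Your remark that pairwise distinctness of the images follows automatically from the partition requirement (nonempty disjoint sets being distinct) is also right.

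There is nothing to compare against in the present paper: the statement is not proved here but is quoted as Theorem~4.1 of \cite{dau}, so the paper's ``proof'' is simply a citation. Your write-up supplies exactly the argument one expects to find behind that citation.
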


In the next two lemmas assume that $M$ is the loop upon $A\times Q$
that has been constructed by means of \pref{13}. Define an
equivalence $\sim$ upon $M$ by
\begin{equation}\label{e14}
(a,x)\sim (b,y) \ \Longleftrightarrow \ a=b.
\end{equation}

\begin{lem}\label{14}
If $(a,x) \in M$, then $[(a,x)]_\sim = \{a\}\times Q =
(a,x)(1\times Q) = (1\times Q)(a,x)$,
and $1\times Q = [(1,1)]_\sim$ is a subloop of $M$.
\end{lem}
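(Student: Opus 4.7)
The first equality $[(a,x)]_\sim=\{a\}\times Q$ is immediate from the definition \eref{14} of $\sim$, so the real content of the lemma is the two product identities and the subloop claim.

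My plan for $(a,x)(1\times Q)=\{a\}\times Q$ is to compute directly from \eref{13}. For any $y\in Q$, the product is $(a,x)\cdot(1,y)=(a\cdot 1,\,f_{a,1}(xy))=(a,f_{a,1}(xy))$. The key ingredient is the standing hypothesis in \pref{13} that $f_{a,b}$ is the identity whenever $1\in\{a,b\}$; in particular $f_{a,1}=\id_Q$, so the product simplifies to $(a,xy)$. As $y$ ranges over $Q$, the map $y\mapsto xy$ is a bijection of $Q$ onto $Q$ (since $Q$ is a loop), hence $(a,x)(1\times Q)=\{a\}\times Q$. The identity $(1\times Q)(a,x)=\{a\}\times Q$ is handled symmetrically, using $f_{1,a}=\id_Q$ and the bijectivity of $y\mapsto yx$.

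For the subloop claim, I would observe that the restricted multiplication on $1\times Q$ is $(1,y)\cdot(1,z)=(1,f_{1,1}(yz))=(1,yz)$, again by the identity clause of \pref{13}. Hence $y\mapsto(1,y)$ is a magma isomorphism from $Q$ onto $1\times Q$, which makes $1\times Q$ itself a loop with neutral element $(1,1)$. Since $M$ is a loop, the unique solutions in $M$ to equations of the form $(1,y)\cdot u=(1,z)$ and $u\cdot(1,y)=(1,z)$ coincide with the solutions computed inside the isomorphic copy of $Q$, so $1\times Q$ is closed under divisions in $M$ and is therefore a subloop.

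There is no real obstacle here; the argument is a routine unpacking of the construction. The only subtlety worth flagging is that the identity hypothesis on $f_{a,b}$ when $1\in\{a,b\}$ is precisely what collapses every product involving the ``fiber over $1$'' into the original loop structure of $Q$, which is exactly why $1\times Q$ behaves as a copy of $Q$ inside $M$ and why $[(a,x)]_\sim$ is simultaneously a left and a right coset of it.
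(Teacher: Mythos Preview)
Your proof is correct and follows essentially the same approach as the paper: compute $(a,x)(1,y)=(a,f_{a,1}(xy))=(a,xy)$ using the standing hypothesis $f_{a,1}=\id_Q$, then use that $y\mapsto xy$ is a bijection of $Q$. The paper's proof consists of precisely this computation in two sentences and leaves the mirror case and the subloop claim implicit; you have simply written these out in full.
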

\begin{proof} By \eref{13}, $(a,x)(1,y) = (a,f_{a,1}(xy)) = (a,xy)$.
Each element of $Q$ can be expressed as $xy$ for some $y\in Q$.
\end{proof}

\begin{lem}\label{15}
The equivalence $\sim$ is a multiplicative equivalence of $M$. It
is a congruence of $Q$ if and only if $A$ is a loop.
\end{lem}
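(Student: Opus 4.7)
The multiplicativity of $\sim$ is tautological from \eref{13}: the first coordinate of $(a,x)\cdot(b,y)$ is $ab$, and $\sim$ ignores second coordinates, so $\sim$ is preserved by multiplication. I would dispatch this in one line.

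For the congruence part, my plan is to identify $M/\!\!\sim$ with $(A,\cdot)$. By \lref{14} the $\sim$-classes are exactly the sets $\{a\}\times Q$, $a\in A$; hence $[(a,x)]_\sim\mapsto a$ is a well-defined bijection, and \eref{13} shows that it intertwines the induced quotient multiplication with the operation $\cdot$ on $A$. Consequently, if $\sim$ is a loop congruence, then $(A,\cdot)$, being isomorphic to the loop $M/\!\!\sim$, is itself a loop.

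For the converse, I would assume $A$ is a loop and verify that $\sim$ is preserved by the two divisions of $M$. Fix $(a,x),(c,u)\in M$. Since $M$ is a loop, there is a unique $(b,y)$ with $(a,x)(b,y)=(c,u)$, and \eref{13} yields $ab=c$ together with $f_{a,b}(xy)=u$. With $A$ a loop, $b$ is determined by $a$ and $c$ alone, independently of $x$ and $u$, so the first coordinate of $(a,x')\bs(c,u')$ equals the same $b$ for arbitrary $x',u'\in Q$. This gives $\sim$-compatibility of left division; the argument for right division is symmetric.

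I do not anticipate serious obstacles. The only point requiring a bit of care is why, in the converse, the first coordinate of a left quotient cannot secretly depend on the second coordinates: this is precisely because $[a\bs c]$ is a singleton once $A$ has unique left division. Were $A$ merely a magma with a neutral element, condition~(2) of \pref{13} would generally permit different $b\in [a\bs c]$ to arise as $u$ ranges over $Q$ (whichever part of the partition $\{\img(f_{a,b}):b\in [a\bs c]\}$ contains $u$), and that is exactly where the congruence property would break down.
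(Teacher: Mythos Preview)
Your proposal is correct and follows essentially the same route as the paper. The paper's proof is a terse two-liner: it notes that the projection $(a,x)\mapsto a$ is multiplicative (giving the multiplicative equivalence), and then observes that $\sim$ is a congruence if and only if $A$ is a loop ``since the multiplication determines the divisions uniquely''; you have simply unpacked this last clause into an explicit verification that the first coordinate of a left (or right) quotient in $M$ depends only on the first coordinates of the operands precisely when $[a\bs c]$ is a singleton in $A$.
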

\begin{proof} The projection $M\to A$, $(a,x)\mapsto
a$, is compatible with multiplication. Hence $\sim$ is
a multiplicative equivalence. This equivalence is a congruence
of $M$ if and only if $A$ is a loop since the multiplication
determines the divisions uniquely.
\end{proof}

\begin{thm}\label{16}
For a prime $p$ define an operation $\oplus = \oplus_p$ upon $\zbb$
by \begin{equation}\label{e15}
a\oplus b = \begin{cases} p\left (\left [ \frac a{p^2} \right ]
+ \left [\frac b{p^2}\right ]\right )
\text{ \ if $p\mid a{+}b$ and $p\nmid a$;}\\
a+b \text{ \  in every other case. }
\end{cases}
\end{equation}
Let an infinite loop $Q$ be partitioned to subsets $Q_i$ of the same
cardinality, $i\in \zbb_{p}$. Thus $Q = \bigcup Q_i$, and $Q_i\cap Q_j =
\emptyset$ if $i,j \in \zbb_{p}$ and $i\ne j$. For each
$i \in \zbb_{p}$ choose a bijection $\pi_i\colon Q\to Q_i$,
and set $\pi_a = \pi_i$ whenever $a\equiv i \bmod p$.

\noindent
Define an operation $\cdot$ upon $\zbb \times Q$ by
\begin{equation}\label{e16}
(a,x)(b,y) = \begin{cases}
(a\oplus b, \pi_{\frac{a+b-p}p}(xy)) \text{ \ if
$p\mid a{+}b$ and $p\nmid a$;}\\
(a\oplus b, xy) = (a+b,xy) \text{ \  in every other case. }
\end{cases}
\end{equation}
Then $M=(A \times Q,\cdot)$ is a loop with unit $(0,1)$ in which
$(a,x)\sim (b,y) \implies a =b$ defines a multiplicative
equivalence such that $S=[(0,1)]_\sim = \{0\}\times Q$ is subloop of $M$
that is not a normal subloop of $M$. If $\vhi$ is a standard generator
of $\inn M$, then $\vhi(S)\subseteq S$, and there exists a standard
generator $\vhi$ such that $\vhi(S) \ne S$.
If $Q$ is commutative, then $M$ is commutative too.
\end{thm}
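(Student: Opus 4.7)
The plan is to recognize $M$ as an instance of the construction of \pref{13} with $A = \zbb$ under $\oplus$ and the $f_{a, b}$ read off from \eref{16}, and then invoke \lref{14}, \lref{15}, and \cref{12}. First I would verify the standing hypotheses of \pref{13}. The element $0$ serves as the unit of $\oplus$: whenever $a = 0$ or $b = 0$ the divisibility $p \mid 0$ blocks the first clause of \eref{15}, so $a \oplus b = a + b$, and simultaneously $f_{a,b} = \id$. Each $f_{a,b}$ is either $\id$ or some $\pi_i$, hence injective. For surjectivity of the translations of $\oplus$, given a target $c$ I would take $x := c - a$ when $p \nmid c$ or $p \mid a$; in the remaining case I would take any of the $p$ integers $x \in [k p^2, (k+1) p^2)$ with $k = c/p - [a/p^2]$ and $x \equiv -a \pmod p$.

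The main technical step is verifying conditions (1) and (2) of \pref{13}. Fix $c, b \in \zbb$. When $p \nmid c$ or $p \mid b$, I would show that $[c/b] = \{c - b\}$ and $f_{c-b, b} = \id$, so $\img(f_{c-b, b}) = Q$. When $p \mid c$ and $p \nmid b$, the set $[c/b]$ has $p$ elements, namely those $a \equiv -b \pmod p$ with $[a/p^2] = c/p - [b/p^2]$. The key calculation is to show that as $a$ ranges over this set, the index $(a + b - p)/p$ runs through all $p$ residues modulo $p$: writing $a = k p^2 + r + j p$ with $r \equiv -b \pmod p$ in $[0, p)$ and $j \in \{0, \ldots, p-1\}$, and setting $r + b = p m$, one obtains $(a + b - p)/p = k p + m + j - 1$, exhibiting these $p$ values as consecutive integers, hence a full transversal modulo $p$. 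Consequently the images $\pi_{(a+b-p)/p}(Q) = Q_i$ are pairwise distinct and partition $Q$, giving (1). Since $\oplus$ is symmetric in $a, b$, condition (2) follows from the same argument with the roles of $a$ and $b$ swapped.

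By \pref{13}, $M$ is a loop. \lref{14} then identifies $S = \{0\} \times Q$ with $[(0,1)]_\sim$ as a subloop, and \lref{15} confirms that $\sim$ is a multiplicative equivalence. Since the equation $1 \oplus x = 0$ already has $p > 1$ distinct solutions by the case analysis above, $(\zbb, \oplus)$ is not a loop, so \lref{15} forbids $\sim$ from being a congruence. \cref{12} therefore yields a standard generator $\vhi$ of $\inn M$ with $\vhi(S) \subsetneq S$, while \pref{11} guarantees $\vhi(S) \subseteq S$ for every standard generator, which is exactly the asserted non-normality of $S$. Commutativity of $M$ when $Q$ is commutative is automatic: the first clause of \eref{15} is preserved under $(a, b) \leftrightarrow (b, a)$ because $p \mid a + b$ and $p \nmid a$ together force $p \nmid b$, so $\oplus$ is commutative and $f_{a,b} = f_{b,a}$; combined with $xy = yx$ this yields $(a,x)(b,y) = (b,y)(a,x)$.
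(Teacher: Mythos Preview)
Your proposal is correct and follows essentially the same route as the paper: cast $M$ in the framework of \pref{13}, verify the partition conditions by showing that in the nontrivial case the indices $(a+b-p)/p$ sweep through a full residue system modulo $p$, and then invoke \lref{14}, \lref{15} and \cref{12}. The only cosmetic differences are that you verify condition~(1) and deduce~(2) by the symmetry $f_{a,b}=f_{b,a}$ (the paper does~(2) first and appeals to ``mirror arguments''), and you explicitly dispatch the commutativity clause, which the paper leaves implicit.
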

\begin{proof}
Let us first verify that the construction of $M$ conforms with
\pref{13}. Consider $a,b\in \zbb$. If $p\nmid a{+}b$ or if
$p\mid a,b$ set $f_{a,b}=\id_Q$. In the other cases put
$f_{a,b}=\pi_{(a+b-p)/p}$.

Fix $a,c\in \zbb$ and consider, with respect to $\oplus$, the set $[a\bs c]$
as defined by \eref{12}.
If $p \nmid c$ and $a\oplus b = c$, then $c = a{+}b$  and
$[a\bs c] = \{c{-}a\}$. The same is true if
$p \mid a,c$. In these cases $f_{a,b} = \id_Q$. This means
that \eref{13} and \eref{16} yield the same result and that
condition (2) of \pref{13} gets satisfied.

Suppose now that $p\mid c$, $c=a\oplus b$ and $p\nmid a$.
Express $a$ as $a_2p^2 + a_1p + a_0$ and $b$ as $b_2p^2 + b_1p + b_0$,
where $\{a_0,b_0,a_1,b_1\} \subseteq \{0,\dots,p{-}1\}$. Then
$c = p(a_2{+}b_2)$ and $p=a_0+b_0$. The set $[a\bs c]$ is hence
equal to $\{cp-a_2p^2 +xp + p-a_0;$ $0\le x < p\}$. That can
be also expressed as $\{cp - a + p(a_1 {+} x {+} 1);$ $0\le x < p\}$.

The fact that $[a\bs c]$ is always nonempty means that
$u\mapsto a\oplus u$ is a surjective mapping $\zbb \to \zbb$ for
any $a \in \zbb$. Assume again that $p\mid c$, $p \nmid a$
and $b \in [a\bs c]$. The definition of $(a,x)(b,y)$ from
\eref{15} agrees with that of \eref{13}, by the choice of $f_{a,b}$.
What remains is to verify condition (2) of \pref{13}. If $b
= cp - a + p(a_1 {+} x {+} 1)$, then $(a+b-p)/p = c + a_1+x$. If
$x$ runs through $\zbb_p$, then $c+a_1+x$ runs through $\zbb_p$ as well.
Hence (2) holds.

Since mirror arguments are also true, $(M,\cdot)$ is a loop,
by \pref{13}. Lemmas~\ref{14} and~\ref{15} imply that \cref{12} can be used
to prove the rest.
\end{proof}

\section{Normal nuclei and LC loops}\label{2}

\emph{Left central} loops (or \emph{LC loops}) were introduced by Fenyves
\cite{f2}. There are several ways how they can be described \cite{f2,
pvv}. A unified treatment appears in
\tref{a2}. While \tref{a2} and \cref{a3} build
upon the existing concepts \cite{f2,pvv,pvc}, they contain several
characterizations that seem to be new.

In a loop $Q$, we will denote the left and right inverses of the element $x$
by $x^{\lambda} := 1\rdv x$ and $x^{\rho} := x\ldv 1$, respectively.

Let $Q$ be a loop. Then $N_{\lambda} = \{a\in Q;$ $a\cdot xy = ax\cdot y$
for all $x,y\in Q\}$ is known as the \emph{left nucleus} of $Q$.
Equations $x\cdot ay = xa\cdot y$ and $xy\cdot a = x\cdot ya$ yield
the \emph{middle} and the \emph{right} nucleus, respectively.
The main new result of this section is that in every LC loop the
left nucleus is a normal subloop (\tref{26}). The proof of this
fact starts with \pref{21} and does not depend upon the preceding
statements.

In each LC loop $N_{\lambda} = N_{\mu}$. This is because every LC loop $Q$
is a \emph{left inverse property} (LIP) loop, i.e.~it satisfies
$x^{\lambda}\cdot xy = y$ for all $x,y\in Q$. An equivalent characterization
of an LIP loop  is that for each $x\in Q$ there exists $y\in Q$ such
that $L_x\m = L_y$, where $L_x\colon a\mapsto xa$ is the \emph{left
translation} of the element $x\in Q$. Similarly, $Q$ is a RIP loop
if the inverse of each right translation $R_x\colon a\mapsto ax$
is also a right translation. Loops that are both LIP and RIP are
called \emph{inverse property} (IP) loops.

A loop $Q$ is said to be a \emph{left alternative property} (LAP) loop
if $x\cdot xy= xx\cdot y$ for all $x,y\in Q$. Clearly, $Q$ satisfies the LAP
if and only if $L_x^2$ is a left translation, for all $x\in Q$.
RAP loops are defined in a mirror way, and the intersection of
LAP and RAP loops is the variety of \emph{alternative property} (AP) loops.

Elements of nuclei can be characterized by means of autotopisms.
Let $Q$ be a loop. Denote by $S_Q$ the symmetric group upon $Q$.
A triple $(\alpha,\beta,\gamma)\in S_Q^3$ is an \emph{autotopism}
if $\alpha(x)\beta(y) = \gamma(xy)$ for all $x,y \in Q$.
Autotopisms of $Q$ form a group that is denoted by $\atp(Q)$.
For $x\in Q$ put $\lambda_x = (L_x,\id_Q,L_x)$, $\rho_x =
(\id_Q,R_x,R_x)$ and $\mu_x = (R_x\m, L_x,\id_Q)$.
The following facts are well known and easy to prove:

\begin{lem}\label{a1}
Let $a$ be an element of a loop $Q$. Then
$\lambda_a \in \atp(Q)$ $\implies$ $a\in N_{\lambda}$,
$\rho_a \in \atp(Q)$ $\implies$ $a\in N_{\rho}$ and
$\mu_a \in \atp(Q)$ $\implies$ $a\in N_{\mu}$.
Furthermore, consider $\sigma=(\alpha,\beta,\gamma)\in \atp(Q)$.
\begin{enumerate}
\item[(i)] $\alpha = \id_Q$ $\implies$ $\sigma = \rho_a$
for some $a\in N_{\rho}$;
\item[(ii)] $\beta = \id_Q$ $\implies$ $\sigma = \lambda_a$
for some $a\in N_{\lambda}$; and
\item[(iii)] $\gamma = \id_Q$ $\implies$ $\sigma = \mu_a$
for some $a\in N_{\mu}$.
\end{enumerate}
\end{lem}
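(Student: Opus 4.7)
The plan is to dispatch the three forward implications by unpacking the autotopism identity $\alpha(x)\beta(y)=\gamma(xy)$ under each given ansatz, and then reduce the three numbered statements back to these by pinning the unknown coordinates of $\sigma$ down as translations via evaluation at the neutral element.

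For the forward implications, the cases $\lambda_a$ and $\rho_a$ are immediate: substituting either triple into $\alpha(x)\beta(y)=\gamma(xy)$ reproduces the defining identity of $N_\lambda$ or $N_\rho$ on the nose. The case $\mu_a=(R_a\m,L_a,\id_Q)$ is only slightly less direct: the identity becomes $R_a\m(x)\cdot ay=xy$, and reparametrising by $x=ua$ turns it into $u\cdot ay = ua\cdot y$, which is the $N_\mu$ identity.

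For (i) the strategy is the standard double evaluation at $1$. In $x\beta(y)=\gamma(xy)$, setting $x=1$ gives $\beta=\gamma$; setting $y=1$ gives $\gamma(x)=x\beta(1)$. Writing $a:=\beta(1)$ one reads off $\sigma=\rho_a$, whereupon the forward implication just proved places $a$ in $N_\rho$. Part (ii) is entirely symmetric: evaluate $\alpha(x)y=\gamma(xy)$ at $y=1$ and $x=1$ to identify $\alpha=\gamma=L_{\alpha(1)}$.

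Part (iii) is the one step that warrants a moment of care. From $\alpha(x)\beta(y)=xy$ and $y=1$ one obtains $\alpha(x)\beta(1)=x$, whence $\alpha=R_{\beta(1)}\m$. To identify $\beta$, the most natural substitution $x=1$ only yields $\alpha(1)\beta(y)=y$, which entangles $\beta$ with the left inverse of $\beta(1)$; instead, substituting $x=a:=\beta(1)$ gives $\alpha(a)\beta(y)=ay$, and since $\alpha(a)=a\rdv a=1$ this forces $\beta=L_a$. Thus $\sigma=\mu_a$ and the forward implication delivers $a\in N_\mu$. No other step presents a genuine obstacle; the whole lemma is a formal consequence of the autotopism law together with the existence of a two-sided neutral element.
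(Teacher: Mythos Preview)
Your proof is correct and is exactly the standard verification one would supply. The paper itself gives no proof of this lemma, merely remarking that the facts are ``well known and easy to prove'', so there is nothing further to compare.
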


Let $Q$ be an LIP loop. Then $x^{\lambda} = x^{\rho}$ for each $x\in Q$. Put
$I(x) = x\m = 1/x$. Then $I$ permutes $Q$ and $(\alpha,\beta,\gamma)
\in \atp(Q)$ $\iff$ $(I\alpha I,\gamma,\beta)\in \atp(Q)$.
\lref{a1} can be used to see that $a\in N_{\lambda}$ $\iff$
$(IL_aI,L_a,\id_Q)$ $\iff$ $a\in N_{\mu}$. Thus $N_{\lambda}
= N_{\mu}$ in every LIP loop $Q$. Similarly $N_{\rho} = N_{\mu}$ in an RIP loop.
Each IP loop thus contains an \emph{nucleus} $N_{\lambda} = N_{\mu} = N_{\rho}$.
An element belonging to the nucleus is said to be \emph{nuclear}.
Note that in a general setting the nucleus is defined as
$N_{\lambda} \cap N_{\mu} \cap N_{\rho}$.

\begin{thm}\label{a2}
Let $Q$ be a loop. The following conditions are equivalent:
\begin{enumerate}
\item[(1)] $x(x\cdot yz) = (x\cdot xy)z$ for all $x,y,z\in Q$;
\item[(2)] $xx\cdot yz = (x\cdot xy)z$ for all $x,y,z \in Q$;
\item[(3)] $(xx\cdot y)z = x (x\cdot yz)$ for all $x,y,z \in Q$;
\item[(4)] $y(x\cdot xz) = (y\cdot xx)z$ for all $x,yz,\in Q$;
\item[(5)] $Q$ is an LAP loop such that $x^2\in N_{\lambda}$ for each
$x\in Q$;
\item[(6)] $Q$ is an LAP loop such that $x^2\in N_{\mu}$ for each
$x\in Q$;
\item[(7)] $Q$ is an LIP loop such that $x^2\in N_{\lambda}$ for
each $x\in Q$;
\item[(8)] $\lambda_x^2 = (L_x^2,\id_Q,L_x^2)\in \atp(Q)$ for each
$x\in Q$;
\item[(9)] $L_x^2L_y$ is a left translation of $Q$, for all $x,y\in Q$; and
\item[(10)] $L_yL_x^2$ is a left translation of $Q$, for all $x,y \in Q$.
\end{enumerate}
\end{thm}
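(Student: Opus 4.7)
The plan is to sort the ten conditions into three groups linked through the left inverse property. Conditions (1), (2), (3), (5), (8), (9) should all collapse to ``LAP together with $x^2 \in N_\lambda$''; conditions (4), (6), (10) to ``LAP together with $x^2 \in N_\mu$''; and (7), which swaps LAP for LIP, will serve as the bridge via the fact, established just before the statement, that $N_\lambda = N_\mu$ in every LIP loop.

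Within each of the first two groups the equivalences are routine. Setting $y = 1$ in (1) or (3), or $z = 1$ in (2), extracts LAP; with LAP in hand each of (1)--(3) reduces to $x^2(yz) = (x^2 y) z$, which is $x^2 \in N_\lambda$. Evaluating $L_x^2 L_y$ at $1$ yields $L_x^2 L_y = L_{x(xy)}$, giving (9) $\iff$ (1); and applying \lref{a1}(ii) to $(L_x^2, \id_Q, L_x^2)$ identifies (8) with LAP plus $x^2 \in N_\lambda$, so (8) $\iff$ (5). The conditions (4), (6), (10) are handled by exactly the same pattern: $y = 1$ in (4) gives LAP, after which (4) becomes $y(x^2 z) = (y x^2) z$ (i.e.\ $x^2 \in N_\mu$), and (10) collapses to (4) via $L_y L_x^2(1) = y x^2$.

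The crux, and the main obstacle, is connecting the two families by deriving LIP on each side. For (1) $\implies$ LIP I would substitute $x^\lambda$ for $x$ in (1) and set $y = x$:
\[
x^\lambda\bigl(x^\lambda(xz)\bigr) = \bigl(x^\lambda(x^\lambda x)\bigr) z = (x^\lambda \cdot 1)z = x^\lambda z,
\]
and then cancel the bijection $L_{x^\lambda}$ to conclude $x^\lambda(xz) = z$. For (7) $\implies$ (5), which supplies LAP on the $N_\lambda$-side, in LIP one has $x x^\lambda = 1$, so $x^2 \in N_\lambda$ forces $(x^2 x^\lambda) x = x^2 (x^\lambda x) = x^2$; bijectivity of $R_x$ then gives $x^2 x^\lambda = x$, hence $L_{x^2} L_{x^\lambda} = L_x$, and since $L_{x^\lambda}^{-1} = L_x$ this rearranges to $L_{x^2} = L_x^2$. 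The obvious analogue of the substitution trick fails for (4), so to obtain (6) $\implies$ LIP I would instead use that $N_\mu$ is a subgroup: pick $u \in N_\mu$ with $u x^2 = x^2 u = 1$, observe that $x^2 \in N_\mu$ gives $u(x^2 y) = (u x^2) y = y$ so $L_{x^2}^{-1} = L_u$, and then with LAP one obtains $L_x^{-1} = L_x L_u$, which is a left translation because $u \in N_\mu$ permits $x(uy) = (xu) y$. Once LIP is secured on either side, the equality $N_\lambda = N_\mu$ swaps the nuclear conditions on $x^2$ between the two families and closes the cycle.
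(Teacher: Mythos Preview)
Your proof is correct and follows the same overall architecture as the paper: collapse (1)--(3),(5),(8),(9) to ``LAP${}+{}x^2\in N_\lambda$'', collapse (4),(6),(10) to ``LAP${}+{}x^2\in N_\mu$'', and bridge the two families via LIP together with $N_\lambda=N_\mu$ in LIP loops. The grouping, the use of \lref{a1}(ii) for (8), and the extraction of LAP by specialization are all as in the paper.

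The local arguments differ in two places. For (1)$\Rightarrow$LIP you substitute $x\mapsto x^\lambda$, $y=x$ and cancel $L_{x^\lambda}$; the paper instead sets $y=x^\rho$ in (1) and cancels $L_x$. Both are one-line tricks. The more interesting divergence is (6)$\Rightarrow$LIP. The paper argues by identity manipulation: it shows $x^\lambda x^2=x$ via $x^\lambda x^2\cdot x^\rho = x^\lambda(x\cdot xx^\rho)=1=xx^\rho$, and then feeds this into (4) to obtain $x^\lambda(x\cdot xz)=xz$. Your argument instead exploits the group structure of $N_\mu$: take the inverse $u$ of $x^2$ inside $N_\mu$, use $x^2\in N_\mu$ to get $L_{x^2}^{-1}=L_u$, combine with LAP to get $L_x^{-1}=L_xL_u$, and use $u\in N_\mu$ to recognize $L_xL_u=L_{xu}$ as a left translation. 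Your route is more structural and avoids the somewhat ad hoc computation of $x^\lambda x^2$; the paper's route is shorter once one spots the identity. Either way the content is the same.
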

\begin{proof} First note that (8) is an equivalent expression of
 (1). The same is true for (9). Indeed, if $x(x\cdot y z) = wz$ for all
$z\in Q$, then the substitution $z=1$ implies that $w = x\cdot xy$.
Similarly, (4) $\iff$ (10).
Furthermore, note that setting $y=1$ or $z=1$ gives the LAP
for all of the identities (1--4). Using LAP it is clear that
(1) $\iff$ (2) and (1) $\iff$ (3). The
equivalence (2) $\iff$ (5) is immediate as well. Now, setting
$y=x^\rho =x\bs 1$ in (1) yields $x(x\cdot x^\rho z) = xz$. Thus
$x \cdot x^\rho z = z$ for all $x,z\in Q$. Hence (1) $\Rightarrow$ (7).
Since $N_{\lambda} = N_{\mu}$ in every LIP loop, the implication (7)
$\Rightarrow$ (5) follows from $x\cdot xy =x (x\m x^2 y) = x^2 y$.
We have shown the equivalence of (1), (2), (3), (5), (7), (8) and (9). The
next step is to prove  (4) $\iff$ (6). An equivalent
form of (4) is $(y/x^2)(x\cdot xz) = yz$. This can be expressed as
 $(R_{x^2}\m,L_x^2,\id_Q)\in \atp(Q)$,
for all $x\in Q$. In an LAP loop $L_x^2 = L_{x^2}$. Thus
(4) $\iff$ (6), by \lref{a1}. Furthermore, (5) $\Rightarrow$ (6)
since (5) $\iff$ (7), and $N_{\lambda} = N_{\mu}$ in every
LIP loop. To finish it thus suffices to prove that every loop
satisfying (6) is an LIP loop. Such a loop
fulfils $x^\lambda  x^2 = x$ since $x^\lambda  x^2  x^\rho
= x^\lambda \cdot x^2 x^\rho =x^\lambda(x\cdot x x^\rho) = 1 = x
x^\rho$.
By (4), $x^\lambda (x\cdot xz)=(x^\lambda x^2)z =
xz$ for all $x,z \in Q$, i.e., $Q$ satisfies the LIP.
\end{proof}

A loop is said to be an \emph{LC loop} if it satisfies the conditions
of \tref{a2}. The mirror conditions yield the \emph{RC loops}.
A loop is said to be a \emph{C loop} \cite{f2,pvc} if it is both
an LC loop and an RC loop. \tref{a2} easily yields the ensuing
characterization of C loops.

\begin{cor}\label{a3}
Let $Q$ be a loop. The following are equivalent:
\begin{enumerate}
\item[(1)] $Q$ is a C loop;
\item[(2)] $Q$ is an IP loop in which each square is nuclear;
\item[(3)] $Q$ is an AP loop with $x^2 \in N_{\mu}$ for all $x\in Q$;
\item[(4)] $(yx\cdot x)z = y(x\cdot xz)$ for all $x,y,z\in Q$;
\item[(5)] $\mu_x^2 = (R_x^{-2}, L_x^2,\id_Q) \in \atp(Q)$ for all $x\in Q$;
\end{enumerate}
\end{cor}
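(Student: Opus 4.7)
The strategy is to leverage \tref{a2} together with its left-right mirror, which (by definition of a C loop as being both LC and RC) both apply to $Q$. The five conditions listed are essentially a repackaging of the combined LC and RC information, so the proof is a short round of bookkeeping. My plan is to establish $(1) \Leftrightarrow (2)$, $(1) \Leftrightarrow (3)$, $(3) \Leftrightarrow (4)$, and $(4) \Leftrightarrow (5)$.

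For $(1) \Leftrightarrow (2)$, \tref{a2}(7) and its mirror say that every C loop is simultaneously LIP and RIP (hence IP) with $x^2 \in N_\lambda \cap N_\rho$; since $N_\lambda = N_\mu$ in any LIP loop, the square $x^2$ lies in the nucleus. For the converse I would check, in an IP loop with $x^2$ nuclear, that $x^2 \in N_\lambda$ applied to $x^\lambda \cdot x = 1$ forces $x^2 x^\lambda = x$ (via right cancellation), and then $x \cdot xy = (x^2 x^\lambda)(xy) = x^2 \cdot (x^\lambda \cdot xy) = x^2 y$, giving LAP; \tref{a2}(5) then delivers LC, and the mirror gives RC. For $(1) \Leftrightarrow (3)$, a C loop is AP with $x^2 \in N_\mu$ by \tref{a2}(5) and its mirror (recalling $N_\lambda = N_\mu$ in any LIP loop), and conversely an AP loop with $x^2 \in N_\mu$ is in particular LAP with $x^2 \in N_\mu$, which is \tref{a2}(6); symmetrically it is RC.

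For $(3) \Leftrightarrow (4)$, setting $y = 1$ in (4) gives LAP and setting $z = 1$ gives RAP, so (4) forces AP; under AP, (4) rewrites as $(y x^2) z = y (x^2 z)$, exactly saying $x^2 \in N_\mu$, and the converse follows by the same rewriting. For $(4) \Leftrightarrow (5)$, the substitution $y \mapsto R_x^{-2}(y)$ turns (4) into $R_x^{-2}(y) \cdot L_x^2(z) = yz$, which is precisely the autotopism condition $\mu_x^2 = (R_x^{-2}, L_x^2, \id_Q) \in \atp(Q)$.

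No step presents any real obstacle; the entire corollary is a transparent consequence of \tref{a2} once one recognizes AP together with $x^2 \in N_\mu$ as the combined LC-RC condition and reads the autotopism form of (5) off of (4) by inspection.
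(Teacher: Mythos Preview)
Your proof is correct and follows essentially the same route as the paper: both derive $(1)\Leftrightarrow(2)$ and $(1)\Leftrightarrow(3)$ from \tref{a2} and its mirror, obtain $(3)\Leftrightarrow(4)$ by specializing $y=1$, $z=1$ and rewriting under AP, and get $(4)\Leftrightarrow(5)$ by the substitution $y\mapsto R_x^{-2}(y)$. The only minor difference is that for $(2)\Rightarrow(1)$ you take a slight detour through LAP to invoke \tref{a2}(5), whereas one can appeal directly to \tref{a2}(7) (an IP loop is in particular LIP, and a nuclear square lies in $N_\lambda$); this is a cosmetic difference, not a substantive one.
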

\begin{proof}
Equivalences (1) $\iff$ (2) and
(1) $\iff$ (3) follow from \tref{a2} in an immediate way.
The implication (3) $\implies$ (4) is also clear. Setting $y = 1$
and $z=1$ in (4) establishes the AP. Hence (3) $\implies$ (4).
Substituting $(y/x)/x$ for $y$ in (4) yields $yz = ((y/x)/x)\cdot
x(xz)$. That is the same as (5).
\end{proof}

The identity $xy \cdot zx = x(yz \cdot x)$ defines Moufang loops and
can be expressed by
saying that $\lambda_x \rho_x \in \atp(Q)$ for all $x\in Q$.
This observation served in \cite{nucid} as an impetus
to investigate all conditions
of the form $\sigma_x^\eps \tau_x^\eta \in\atp(Q)$
for each $x\in Q$, where $\{\eps,\eta\}\subseteq \{-1,1\}$, $\{\sigma,\tau
\}\subseteq \{\lambda,\rho,\mu\}$ and $\sigma\ne \tau$. It turns out
that the varieties obtained in this way are the varieties
of Moufang, left Bol, right Bol,
left conjugacy closed (LCC), right conjugacy closed
(RCC), Buchsteiner and
extra loops. In other words, these are the varieties can be obtained
by \emph{nuclear identification}.

The case when $\sigma = \tau$ and $\eps = \eta$ was not investigated
in \cite{nucid}. This may be regarded as an omission. Assume
$\eps = \eta = 1$ and $\sigma = \tau$. The case $\sigma = \lambda$
describes the LC loops, by \tref{a2}. The RC loops correspond
to the case $\sigma = \rho$, by a mirror argument. \cref{a3}
implies that the C loops can be obtained from the case $\sigma = \mu$.
The LC loops, RC loops and C loops thus result from a
nuclear identification as well. It is easy to verify that the
same varieties appear when $\sigma = \tau$ and $\eps=\eta=-1$
is assumed.

%The left Bol identity (LBol) can be expressed by saying that $L_xL_yL_x$
%is a left translation for all $x,y \in Q$. Similarly, $Q$ is LCC
%if $L_xL_yL_x\m$ is always a left translation. An alternative
%expression of LCC is that each $L_x\m L_y L_x$ is a left translation.
%Left Bol loops satisfy the LIP and the LAP. The left and the middle
%nuclei coincide in both LBol and LCC loops.

\begin{prop}\label{a4}
If a loop $Q$ satisfies two of the LCC, LC and LBol identities,
then it satisfies all three. A left Bol loop $Q$ is LC if and only
if $x^2\in N_{\lambda}$ for each $x\in Q$.
\end{prop}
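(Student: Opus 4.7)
The plan is to work with the left-translation characterizations of the three varieties: by \tref{a2}(9), $Q$ is LC iff $L_x^2 L_y$ is a left translation for all $x,y$; by definition, $Q$ is LCC iff $L_x L_y L_x^{-1}$ is a left translation; and $Q$ is left Bol iff $L_x L_y L_x = L_{x\cdot yx}$. The second assertion follows at once from \tref{a2}: setting $y=1$ in the left Bol identity yields the LAP, so a left Bol loop with $x^2 \in N_{\lambda}$ is LC by condition (5) of that theorem, and the converse is also condition (5).

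For the three pairwise implications, I would lean on two standing facts. First, both LC and left Bol force LIP (\tref{a2}(7) for LC; a standard consequence of $L_x L_y L_x = L_{x\cdot yx}$ for left Bol), so whenever either of these is present we may use $L_x^{-1} = L_{x^{-1}}$ and $N_{\lambda} = N_{\mu}$. Second, under LC the LAP gives $L_x^2 = L_{x^2}$ and \tref{a2}(5) places $x^{\pm 2} \in N_{\lambda}=N_{\mu}$.

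Each of the three reductions is then a short translation-algebra computation. For LCC~$+$~LBol~$\Rightarrow$~LC, rewrite LCC as $L_x L_y = L_c L_x$, and apply left Bol with $c$ in place of $y$ to obtain $L_x^2 L_y = L_x L_c L_x = L_{x\cdot cx}$, a left translation. For LC~$+$~LCC~$\Rightarrow$~LBol, factor $L_x L_y L_x = (L_x L_y L_x^{-1})L_x^2 = L_c L_{x^2}$, and collapse the product to $L_{c\cdot x^2}$ using $x^2 \in N_{\mu}$; evaluation at $1$ identifies this with $L_{x\cdot yx}$. The mirror case LC~$+$~LBol~$\Rightarrow$~LCC writes $L_x L_y L_x^{-1} = L_{x\cdot yx}L_{x^{-2}}$ and collapses the product using $x^{-2}\in N_{\mu}$, which comes from \tref{a2}(5) applied to $x^{-1}$.

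The main obstacle is the bookkeeping: checking at every step that the required inverse property and nuclear containment are in fact available. Once it is noted that each of the three pair hypotheses supplies LIP on its own, and that \tref{a2}(5) supplies the relevant squares in $N_{\lambda}=N_{\mu}$ on the LC side, each of the three arguments collapses a product of two translations into a single left translation.
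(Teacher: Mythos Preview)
Your argument is correct. The second assertion is handled exactly as in the paper (left Bol $\Rightarrow$ LAP, then \tref{a2}(5)), and each of your three translation-algebra reductions goes through as written: in the LC${}+{}$LCC case the collapse $L_cL_{x^2}=L_{cx^2}$ is justified because $x^2\in N_\mu$, and in the LC${}+{}$LBol case the step $L_x^{-2}=L_{(x^{-1})^2}$ uses LIP plus LAP exactly as you indicate.

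The route, however, differs from the paper's. The paper does not carry out any of the three pairwise computations directly; instead it observes that (a) by \tref{a2}(5), a left Bol loop is LC iff all squares lie in $N_\lambda$, (b) by a formula from \cite{nucid}, a left Bol loop is LCC iff all squares lie in $N_\lambda$, and (c) again by a formula from \cite{nucid}, an LCC loop with LAP and squares in $N_\lambda$ is left Bol. So the paper reduces everything to the single condition ``$x^2\in N_\lambda$'' and outsources two of the three links to \cite{nucid}. Your approach is self-contained: you never leave the translation picture and never invoke \cite{nucid}, at the cost of doing three short computations instead of one citation-based paragraph. Either is fine; yours has the advantage that a reader without \cite{nucid} at hand can verify every step.
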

\begin{proof} Every left Bol loop satisfies the LAP. Hence point (5)
of \tref{a2} can be used to prove that a left Bol loop $Q$ is an LC loop
if and only
if $x^2\in N_{\lambda}$ for each $x\in Q$.
The latter condition also characterizes
those left Bol loops that are LCC, e.g.~by formula (11) of \cite{nucid}.
A loop $Q$ that is both LCC and LC is an LCC loop that satisfies
the LAP and has
all squares in $N_{\lambda}$. Such a loop is left Bol, e.g.~by
formula (13) of \cite{nucid}.
\end{proof}

\begin{prop}\label{a5}
Let $Q$ be a loop. The following are equivalent.
\begin{enumerate}
\item[(i)] $Q$ is an extra loop;
\item[(ii)] $Q$ is an LC loop that is also a right Bol loop, or
an RCC loop, or a Buchsteiner loop; and
\item[(iii)] $Q$ is a C loop that is also a left Bol loop, or
an LCC loop.
\end{enumerate}
\end{prop}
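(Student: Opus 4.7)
My plan is to prove the equivalences by reducing each configuration of hypotheses to the classical characterization of extra loops as Moufang loops with nuclear squares. The forward implications (i) $\Rightarrow$ (ii) and (i) $\Rightarrow$ (iii) are routine, since an extra loop is Moufang with nuclear squares and hence satisfies LC, RC, LBol, RBol, LCC, RCC, C, and the Buchsteiner identity all at once. I therefore concentrate on the reverse implications.

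For (iii) $\Rightarrow$ (i) I will use the classical fact that a left Bol loop with the right inverse property is Moufang. A C loop is IP with nuclear squares by $\cref{a3}$, so in the C $+$ LBol subcase $Q$ is immediately Moufang, and nuclearity of squares then gives extra. In the C $+$ LCC subcase, $Q$ is LC and hence $x^2 \in N_{\lambda}$; then formula (13) of \cite{nucid} (the same formula invoked in the proof of $\tref{a2}$) upgrades LCC to LBol, reducing to the previous subcase.

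For (ii) $\Rightarrow$ (i) the plan is to first install a C loop structure on $Q$ and then invoke the mirror of the (iii) argument. In LC $+$ RBol, $Q$ has LIP $+$ RIP $=$ IP and LAP $+$ RAP $=$ AP, while $x^2 \in N_{\lambda} = N_{\mu} = N_{\rho}$ inside an IP loop; $\cref{a3}$ now certifies $Q$ as a C loop, and the mirror of ``C $+$ LBol $\Rightarrow$ extra'' concludes. For LC $+$ Buchsteiner and LC $+$ RCC I intend to argue analogously: Buchsteiner loops are IP, so LC $+$ Buchsteiner already has IP and nuclear squares; for LC $+$ RCC, the mirror of formula (13) of \cite{nucid} should upgrade the hypotheses to LC $+$ RBol once $x^2 \in N_{\rho}$ has been established.

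The main obstacle I foresee lies in the LC $+$ Buchsteiner and LC $+$ RCC subcases, where relocating squares into $N_{\rho}$ and producing RAP is not immediate. Here the autotopism framework of \cite{nucid} is essential: I plan to compose the LC autotopism $\lambda_x^2 = (L_x^2, \id_Q, L_x^2)$ from $\tref{a2}$(8) with the defining autotopism of RCC (respectively Buchsteiner) and read off, via $\lref{a1}$, the required nuclear memberships on the right. This is bookkeeping rather than a substantive new idea, but it is where the detailed work will happen.
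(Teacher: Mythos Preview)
Your plan for (i) $\Rightarrow$ (ii),(iii), for C $+$ LBol, for C $+$ LCC, and for LC $+$ RBol is sound, though somewhat longer than the paper's route (the paper gets LC $+$ RBol $\Rightarrow$ Moufang directly from ``right Bol $+$ LAP $\Rightarrow$ Moufang'' rather than passing through C). The real problem is in your LC $+$ Buchsteiner and LC $+$ RCC subcases.

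First, the assertion ``Buchsteiner loops are IP'' is simply false; Buchsteiner loops need not satisfy either inverse property (e.g.\ CC loops with nuclear squares are Buchsteiner but are not IP in general). So you cannot get RIP, hence not a C loop, from LC $+$ Buchsteiner that way. Second, for LC $+$ RCC your intended use of the mirror of formula (13) of \cite{nucid} does not apply: that mirror says RCC $+$ RAP $+$ $x^2\in N_{\rho}$ $\Rightarrow$ RBol, but LC gives you LAP, not RAP, and you have no mechanism to produce RAP before you already know $Q$ is extra. The fallback you sketch---composing $\lambda_x^2$ with the RCC or Buchsteiner autotopism and reading off right-nuclear elements via \lref{a1}---is not the bookkeeping you think it is: the relevant compositions do not have an identity in the first slot, so \lref{a1} does not fire to give $N_{\rho}$ directly.

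The paper avoids all of this by citing the mirror of \cite[Corollary~2.5]{nucid}: an LAP loop that is RCC or Buchsteiner is already an extra loop. Since LC loops are LAP, this disposes of both problematic subcases in one line. If you want to keep your overall architecture, the fix is to replace your IP/formula-(13) manoeuvres in those two subcases by this citation.
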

\begin{proof} Extra loops are the Moufang loops with squares in the
nucleus \cite{f1}. Each extra loop is thus a C loop, by \cref{a3}.
Extra loops are conjugacy closed because a Moufang loop is
conjugacy closed if and only if all squares are in the nucleus, as
in \cite{f2} (cf.~formula (13) of \cite{nucid}). Thus (i) $\Rightarrow$ (iii).
The definition of extra loops is mirror symmetric. Thus also (i)
$\Rightarrow$ (iii'), where (iii') is the mirror version of (iii).
Denote by (ii') the condition (ii) from which the Buchsteiner clause
is removed. Trivially, (iii') $\Rightarrow$ (ii').
%Point (iii) follows from the mirror version of point (ii).
An LAP loop that is RCC or Buchsteiner is an extra loop,
by the mirror version of \cite[Corollary 2.5]{nucid}. Hence an LC
loop has to be an extra loop if it is a Buchsteiner loop or an RCC loop.
A right Bol loop with the LAP is Moufang \cite{pf}. Hence an
LC loop is extra if it is right Bol. Therefore (ii) $\Rightarrow$
(i). Both (i) $\iff$ (iii') and (i) $\iff$
(iii) follow. Thus if (iii) holds, then $Q$ is conjugacy closed. Conjugacy
closed loops with squares in the nucleus are Buchsteiner \cite[Theorem~3.3]{%
nucid}. Therefore (iii) $\Rightarrow$ (ii).
\end{proof}

Let $x$ be an element of loop $Q$. Then $T_x$ is defined as $R_x\m L_x$.
\begin{prop}\label{21}
Let $Q$ be a loop with a subloop $S$. Suppose that
$S\le N_{\lambda} \cap N_{\mu}$. If $T_x^{\pm 1}(s) \in S$ for
each $x\in Q$ and $s\in S$, then $S \unlhd Q$. Furthermore,
$L_{xy}\m L_xL_y(s) = T_{xy}\m T_x T_y(s)$ for all $x,y \in Q$ and $s\in S$.
\end{prop}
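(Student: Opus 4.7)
The plan is to check normality by verifying $\vhi(S) \subseteq S$ for each standard generator $\vhi$ of $\inn Q$, using the characterization recalled in \secref{1}. For the generator $L_x\m R_x = T_x\m$, the conclusion is immediate from the hypothesis. For $R_{yx}\m R_x R_y$, I would use $s \in N_\lambda$ to compute $R_x R_y(s) = (sy)x = s(yx) = R_{yx}(s)$, so right inner mappings fix $S$ pointwise.

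The real work is the left inner mapping case, which will also produce the announced identity in one stroke. The idea is to peel $s$ through $L_y$ and then $L_x$ using the $T$-operators: put $s' = T_y(s)$ and $s'' = T_x(s')$, both in $S$ by hypothesis. By the definition of $T$, $ys = s'y$ and $xs' = s''x$. I would then invoke $s' \in N_\mu$ at the third equality and $s'' \in N_\lambda$ at the fifth in the cascade
\[ L_x L_y(s) = x(ys) = x(s'y) = (xs')y = (s''x)y = s''(xy) = R_{xy}(s''). \]
Since $T_{xy}\m = L_{xy}\m R_{xy}$, this rearranges to $L_{xy}\m L_x L_y(s) = T_{xy}\m(s'') = T_{xy}\m T_x T_y(s)$, which is both the stated identity and, upon one more application of the $T^{\pm 1}$ hypothesis to $s''$, an element of $S$. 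Thus $S \unlhd Q$.

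The main obstacle is conceptual rather than computational: one must spot that applying $T_y$ and then $T_x$ to $s$ is exactly the sequence that moves $s$ leftward past $y$ and then past $x$, after which the middle and left nucleus assumptions absorb the result into $xy$. Crucially, the inclusion $S \le N_\lambda \cap N_\mu$ guarantees that the intermediate elements $s'$, $s''$ still enjoy the nuclear properties needed at each stage; once this is noticed, the remainder is bookkeeping.
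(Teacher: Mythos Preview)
Your proof is correct and is essentially the same as the paper's. Both verify normality generator by generator, dispose of the right inner generator via $s\in N_\lambda$, and establish the key identity by the chain $x(ys)=x(s'y)=(xs')y=(s''x)y=s''(xy)$ with $s'=T_y(s)$, $s''=T_x(s')$; the paper runs this chain in the opposite direction (starting from $T_xT_y(s)\cdot xy$), but the nuclear steps and the final rearrangement via $T_{xy}^{-1}=L_{xy}^{-1}R_{xy}$ are identical.
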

\begin{proof}
Suppose that $s \in S$ and $x,y \in Q$. Then
$sx\cdot y = s\cdot xy$ and so $R_{xy}\m R_y R_x(s) = s$.
Furthermore,
$$T_xT_y(s)\cdot xy = T_xT_y(s)x \cdot y = xT_y(s)\cdot y
= x \cdot T_y(s)y = x \cdot ys.$$
Therefore $T_{xy}\m T_x T_y(s) = (xy)\bs (T_xT_y(s)\cdot xy)
= xy \bs (x \cdot ys) = L_{xy}\m L_x L_y(s) \in S$.
\end{proof}

\begin{cor}\label{22}
Let $Q$ be a loop such that $N_{\mu} = N_{\lambda} \le Q$.
Then $N_{\mu}$ is a normal subloop if and only if
$T_x^{\pm 1}(a) \in N_{\mu}$ for all $x\in Q$ and $a \in N_{\mu}$.
\end{cor}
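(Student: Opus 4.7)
The plan is to derive this corollary as a direct application of \pref{21}, once one notes that it already supplies the hard direction, and that the remaining direction is essentially a one-line observation about $\inn Q$.

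For the forward implication, I would use that $L_x\m R_x$ is by definition one of the standard generators of $\inn Q$, and since $\inn Q$ is a group, its inverse $T_x = R_x\m L_x$ also lies in $\inn Q$. Hence both $T_x$ and $T_x\m$ are inner mappings of $Q$. If $N_{\mu}\unlhd Q$, then every inner mapping carries $N_{\mu}$ into itself, so in particular $T_x^{\pm 1}(a)\in N_{\mu}$ for every $x\in Q$ and every $a \in N_{\mu}$.

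For the converse, I would set $S = N_{\mu}$. The middle nucleus is always a subloop (indeed a subgroup) of $Q$, so $S\le Q$; the assumed equality $N_{\mu} = N_{\lambda}$ then gives $S \subseteq N_{\lambda}\cap N_{\mu}$. The condition $T_x^{\pm 1}(a)\in N_{\mu}$ assumed in the corollary is precisely the remaining hypothesis of \pref{21}, which therefore applies and yields $N_{\mu}\unlhd Q$.

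There is no real obstacle here: the entire content of the argument is the remark that $T_x$ and $T_x\m$ both belong to $\inn Q$, combined with the fact that \pref{21} is already available. The only structural input beyond that is the classical observation that each nucleus is a subloop of $Q$, which I would invoke without further comment.
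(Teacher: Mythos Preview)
Your proposal is correct and matches the paper's approach: the corollary is presented immediately after \pref{21} with no separate proof, precisely because the nontrivial direction is \pref{21} applied to $S = N_{\mu} = N_{\lambda}$, while the forward direction is the trivial observation that $T_x^{\pm 1}\in\inn Q$. Your write-up makes both points explicit and is entirely in line with what the paper intends.
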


\begin{lem} \label{23}
Let $Q$ be a LIP loop such that $xax\m \in N_{\mu}$ for all $x\in Q$
and $a \in N_{\mu}$. Then $T_x\m(a) = x\m ax$
and $T_x(a) = xax\m$.
\end{lem}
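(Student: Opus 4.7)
The plan is to unwind the definition $T_x = R_x\m L_x$ and exploit two features of the LIP hypothesis: the left and right inverses of $x$ coincide with a genuine two-sided inverse $x\m$ satisfying $xx\m = x\m x = 1$, and $L_x\m = L_{x\m}$. The hypothesis $a \in N_{\mu}$ makes the bracketings $x\m(ax) = (x\m a)x$ and $x(ax\m) = (xa)x\m$ unambiguous, so the expressions $x\m a x$ and $xax\m$ are well defined to begin with.

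For the first formula I would simply compute
$$T_x\m(a) \;=\; L_x\m R_x(a) \;=\; L_{x\m}(ax) \;=\; x\m(ax),$$
and this equals $x\m ax$ by middle-nuclearity of $a$. Note that the conjugation hypothesis is not needed here; only LIP and $a \in N_{\mu}$ are used.

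For the second formula I would set $b := xax\m$, which lies in $N_{\mu}$ by the conjugation hypothesis. Applying the middle-nucleus property of $b$ to the pair $(x,x\m)$ and using $xx\m = 1$ gives $(bx)x\m = b(xx\m) = b$. On the other hand, $b = (xa)x\m$ by the parsing observation above, so $(bx)x\m = (xa)x\m$, and since $R_{x\m}$ is a bijection this yields $bx = xa$. Hence
$$T_x(a) \;=\; R_x\m L_x(a) \;=\; R_x\m(xa) \;=\; R_x\m(bx) \;=\; b \;=\; xax\m.$$

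The argument is purely algebraic bookkeeping, so I do not anticipate any real obstacle; the only thing to stay alert about is keeping track of which bracketing of $x^{\pm 1} a x^{\mp 1}$ is being used at each step, and confirming each use of middle-nuclearity (once for $a$, once for $b$) is legitimate.
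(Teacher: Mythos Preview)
Your overall strategy is sound and very close to the paper's, but there is one mislabeled step. The identity $(bx)x\m = b(xx\m)$ is \emph{not} the middle-nucleus property of $b$: the defining law for $b\in N_\mu$ is $u(bv)=(ub)v$, whereas what you need is $b(uv)=(bu)v$, i.e.\ $b\in N_\lambda$. The repair is immediate: in any LIP loop one has $N_\lambda=N_\mu$ (this is recalled in the paper just before the lemma), so from $b=xax\m\in N_\mu$ you do get $b\in N_\lambda$ and hence $(bx)x\m=b(xx\m)=b$, after which your deduction $bx=xa$ and $T_x(a)=R_x\m(xa)=R_x\m(bx)=b$ is correct.

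The paper's proof runs the same computation in the inverse direction: it evaluates $T_x\m(b)=x\m(bx)$ and simplifies to $a$, using $b\in N_\mu$ to move the bracket to $(x\m b)x$, then LIP to get $(ax\m)x$, and finally $a\in N_\lambda$ (again via $N_\lambda=N_\mu$) to reach $a(x\m x)=a$. So both arguments invoke $N_\lambda=N_\mu$ at exactly one point; the paper applies it to $a$, you apply it to $b$. Your packaging via $bx=xa$ is arguably slightly cleaner, but the content is the same.
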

\begin{proof}
We have $T_x\m(y) = x\bs (yx) = x\m \cdot yx $ for all $x,y \in Q$
since $Q$ is LIP. Put $y = xax\m$. By our assumptions, $y \in N_{\mu}$,
and so $T_x\m(xax\m) = x\m (xax\m \cdot x) = (x\m \cdot xax\m) x
= (x\m (x\cdot ax\m))x = (ax\m) x = a (x\m x) = a$. Therefore
$T_x(a) = xax\m$.
\end{proof}

\begin{cor}\label{24}
Let $Q$ be a LIP loop such that $xax\m \in N_{\mu}$ for all $x\in Q$
and $a \in N_{\mu}$. Then $N_{\mu} \unlhd Q$.
\end{cor}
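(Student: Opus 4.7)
The plan is to assemble the results immediately preceding the statement. Because $Q$ is an LIP loop, the discussion before \tref{a2} shows that $N_{\lambda}=N_{\mu}$; in particular, $N_{\mu}$ is a subloop of $Q$ contained in $N_{\lambda}\cap N_{\mu}$, so the hypotheses of \cref{22} are available.

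The hypothesis $xax\m\in N_{\mu}$ together with \lref{23} immediately gives
$T_x(a)=xax\m\in N_{\mu}$ for all $x\in Q$ and $a\in N_{\mu}$. For $T_x\m$, the formula $T_x\m(a)=x\m ax$ from \lref{23} lets us reduce to the same conjugation hypothesis: since in an LIP loop every element has a two-sided inverse (so $(x\m)\m=x$), substituting $x\m$ for $x$ in the standing assumption yields $x\m a x\in N_{\mu}$, that is, $T_x\m(a)\in N_{\mu}$.

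With both $T_x(a)$ and $T_x\m(a)$ in $N_{\mu}$ verified for all $x\in Q$ and $a\in N_{\mu}$, \cref{22} applies and gives $N_{\mu}\unlhd Q$.

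There is essentially no obstacle here; the only thing one has to be careful about is confirming that the hypothesis $xax\m\in N_{\mu}$, which is stated only for $x\in Q$, also implies $x\m ax\in N_{\mu}$. This is where the LIP assumption is used a second time (beyond its role in identifying $N_{\lambda}$ with $N_{\mu}$ and in enabling \lref{23}), via the involutivity of the inverse map.
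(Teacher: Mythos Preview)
Your proof is correct and follows exactly the route the paper takes: invoke $N_{\lambda}=N_{\mu}$ for LIP loops, use \lref{23} to translate the hypothesis into $T_x^{\pm 1}(N_{\mu})\subseteq N_{\mu}$, and conclude via \cref{22}. The paper's proof is terser and does not spell out the $T_x^{-1}$ case, but the argument you supply (replacing $x$ by $x^{-1}$ and using that the inverse is involutive in an LIP loop) is precisely the missing routine detail.
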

\begin{proof} This follows directly from \lref{23}
and \cref{22} since $N_{\lambda} = N_{\mu}$ in every LIP loop $Q$.
\end{proof}

If $Q$ is an LIP loop, $x\in Q$ and $a \in N_{\mu}=N_{\lambda}$, then
$(ax)\m = x\m a\m$ as $(x\m a\m)(ax) = x\m x = 1$ and $(xa)\m = a\m x\m$
as $(xa)(a\m x\m) = 1$.

\begin{lem}\label{25}
Let $Q$ be an LC loop. Then $xax\m \in N_{\mu}$ for every
$x \in Q$ and $a \in N_{\mu}$.
\end{lem}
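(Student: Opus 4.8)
The plan is to avoid the autotopism machinery entirely for this step and to exploit instead the one piece of LC structure that really does the work: every square lies in the (left, hence middle) nucleus. Recall that $Q$ is LIP, so $N_{\lambda}=N_{\mu}=:N$ is a genuine subgroup of $Q$, closed under inverses, and that $x\m=x^{\lambda}=x^{\rho}$ is a two-sided inverse with $L_x\m=L_{x\m}$; moreover $x^2\in N$ for every $x$ by \tref{a2}(5). The first thing I would record is the elementary cancellation $x^2x\m=x$, which follows from $(x^2x\m)x=x^2(x\m x)=x^2$ (using $x^2\in N_{\lambda}$) together with the injectivity of $R_x$.

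Before committing to a computation, let me flag what I expect to be the main obstacle. One's first instinct is to realise $xax\m\in N$ by transporting the autotopism $\lambda_a=(L_a,\id,L_a)$ along some autotopism built from $x$. This is doomed: in an LC loop the only translation autotopism available for a general $x$ is $\lambda_x^2=(L_x^2,\id,L_x^2)$ (\tref{a2}(8)), never $\lambda_x$ itself, so every conjugation one can form merely produces conjugation by $x^2$, which preserves $N$ for the trivial reason that $x^2\in N$. Hence the honest content of the lemma cannot be reached by pushing a single $x$ past $a$; a different device is needed.

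The device I would use is to apply the square law not to $x$ but to the auxiliary element $ax\m$. Using only $a\in N_{\lambda}\cap N_{\mu}$, I would compute
\[
(ax\m)^2=(ax\m)(ax\m)=a\bigl(x\m(ax\m)\bigr)=a\bigl((x\m a)x\m\bigr),
\]
where the second equality is $a\in N_{\lambda}$ and the third is $a\in N_{\mu}$. Since $(ax\m)^2$ is a square it lies in $N$, and since $a\in N$ I may cancel $a$ on the left (via $a\m\in N_{\lambda}$) to conclude $(x\m a)x\m=a\m(ax\m)^2\in N$. In other words the ``doubly left-inverted'' element $(x\m a)x\m$ is already nuclear.

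It then remains only to rebuild $xax\m$ from this nuclear element and a square. Using $x^2\in N_{\lambda}$ and the cancellation $x^2x\m=x$ recorded above, I would verify
\[
x^2\cdot\bigl((x\m a)x\m\bigr)=\bigl(x^2(x\m a)\bigr)x\m=\bigl((x^2x\m)a\bigr)x\m=(xa)x\m=xax\m .
\]
Both factors on the left, namely $x^2$ and $(x\m a)x\m$, lie in $N$, and $N$ is closed under multiplication, so $xax\m\in N=N_{\mu}$, which is exactly the claim. The only subtlety to watch in writing this up is the bookkeeping of which nucleus ($N_{\lambda}$ or $N_{\mu}$) licenses each reassociation; since the two coincide here, the individual steps are routine once the key idea—squaring $ax\m$ rather than conjugating by $x$—is in hand.
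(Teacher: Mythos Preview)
Your proof is correct and uses essentially the same idea as the paper: reduce $xax\m$ to a product of nuclear elements by exploiting that every square lies in $N=N_\lambda=N_\mu$. The only difference is cosmetic---the paper multiplies $xax\m$ on the left by the single square $((xa)\m)^2=(a\m x\m)^2$ and simplifies directly to $a\m x^{-2}\in N$, whereas you pass through the intermediate element $(x\m a)x\m=a\m(ax\m)^2\in N$ and then multiply by $x^2$; the underlying mechanism is identical.
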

\begin{proof}
An LC loop is an LIP loop. Hence $N_{\lambda} = N_{\mu}$. All squares
of an LC loop belong to $N_{\lambda}$. Therefore $x ax\m \in N_{\mu}$ if
and only if $y^2 \cdot xax\m \in N_{\mu}$ for some $y \in Q$.
Put $y = (xa)\m = a\m x\m$. Then
$y^2 \cdot xax\m = y^2 \cdot y\m x\m = y\cdot (y\cdot y\m x\m)
= a\m x\m \cdot x\m = a\m x^{-2} \in N_{\mu}$.
\end{proof}

\begin{thm}\label{26}
If $Q$ is an LC loop, then $N_{\lambda} = N_{\mu} \unlhd Q$.
\end{thm}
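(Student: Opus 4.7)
The plan is to chain together the preceding results, which have been arranged so that essentially no new work remains. Theorem~\ref{a2} already tells us that every LC loop is an LIP loop (condition (7) of that theorem), so $N_{\lambda} = N_{\mu}$ automatically. What remains is to verify that $N_\mu$ is actually normal, and the machinery for this has been assembled in \lref{23}, \cref{22}, \cref{24}, and \lref{25}.

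Concretely, I would argue as follows. By \lref{25}, the conjugate $xax\m$ lies in $N_\mu$ whenever $x\in Q$ and $a\in N_\mu$. This is precisely the hypothesis of \cref{24}, which was proved by first using \lref{23} to identify $T_x(a)$ and $T_x\m(a)$ with the conjugates $xax\m$ and $x\m ax$, and then applying \cref{22} (itself a consequence of \pref{21}) to deduce normality from closure under the inner maps $T_x^{\pm 1}$. Invoking \cref{24} thus yields $N_\mu \unlhd Q$, and combining this with $N_\lambda = N_\mu$ finishes the theorem.

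The only genuinely substantive step in the chain is \lref{25}, where the LC hypothesis is used in the key form ``all squares lie in $N_\lambda$'': the element $xax\m$ is shown to lie in $N_\mu$ by multiplying on the left by a carefully chosen square, namely $y^2$ with $y=(xa)\m$, to reduce the problem to $a\m x^{-2}\in N_\mu$. Since that lemma is already established, the proof of \tref{26} itself is a one-line assembly, and no separate obstacle arises at this stage.
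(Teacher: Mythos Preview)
Your proposal is correct and follows exactly the same route as the paper: the paper's own proof of \tref{26} is literally the one-line assembly you describe, citing \lref{25} and \cref{24}. Your additional commentary on how \cref{24} is built from \lref{23}, \cref{22}, and \pref{21}, and on the role of the square trick in \lref{25}, is accurate but goes beyond what the paper writes at this point.
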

\begin{proof} This is a straightforward consequence
of \lref{25} and \cref{24}.
\end{proof}

By \tref{26} the nucleus of a C loop is normal. This was first
proved by Phillips and Vojt\v echovsk\'y in \cite{pvc}.

\section{Osborn loops}\label{3}
A major motivation for studying Osborn loops is that they are a broad structured
variety of loops which include interesting classical varieties of loops, such as
Moufang loops and conjugacy closed loops, as special cases. Before turning to that,
we look at another approach. We informally mimic the scheme of nuclear identification
studied in \cite{nucid}.

Let $Q$ be a loop. For $x\in Q$,
let $\alpha_x, \beta_x\in \Sym{Q}$ satisfy $\alpha_x(1) = \beta_x(1) = x$.
If $(\alpha_x,\id_Q,\alpha_x)$ and $(\id_Q,\beta_x,\beta_x)$ are autotopisms, then it is
easy to see that $\alpha_x = L_x$ and $\beta_x = R_x$, in which case the autotopisms
are $\lambda_x$ and $\rho_x$, respectively. However, instead of assuming
at the outset that $\alpha_x$ and $\beta_x$ are translations, we leave them as permutations
to be determined. Analogous with the approach in \cite{nucid},
we assume that for each $x\in Q$,
\[
(\id_{Q},\beta_x,\beta_x)(\alpha_x,\id_{Q},\alpha_x) = (\alpha_x,\beta_x,\beta_x \alpha_x)
\]
is an autotopism, i.e.,
\[
\alpha_x (y)\cdot \beta_x (z) = \beta_x \alpha_x (yz)
\]
for all $x,y,z\in Q$. (The arbitrariness of the choice of the order in which we
multiplied the triples will be dealt with below.)

Setting $z=1$, we get $\alpha_x = R_x\inv \beta_x \alpha_x$, and so $\beta_x = R_x$.
Thus
\begin{equation}\label{Eq:osbtmp1}
\alpha_x(y) \cdot zx = \alpha_x(yz)\cdot x
\end{equation}
for all $x,y,z\in Q$. Taking $y=1$, we get $\alpha_x(z)\cdot x = x\cdot zx$.
This gives us $\alpha_x = R_x\inv L_x R_x$, but in the interest of easily finding
other expressions for $\alpha_x$, we will rewrite \eqref{Eq:osbtmp1} as
\begin{equation}\label{Eq:osbA}
  \alpha_x(y)\cdot zx = x(yz\cdot x)\,.
\end{equation}
Our automorphism is now
\[
\psi_x := ( \alpha_x, R_x, L_x R_x )\,.
\]

A loop $Q$ is said to be an \emph{Osborn loop} if for each $x\in Q$, there
exists $\alpha_x\in \Sym(Q)$ such that \eqref{Eq:osbA} holds. As noted,
$\alpha_x$ can be expressed in terms of translations so that \eqref{Eq:osbA}
is an identity in the language of loops and thus Osborn loops form a variety.
We next list a few different expressions for $\alpha_x$.

\begin{lem}\label{Lem:alpha}
  Let $Q$ be an Osborn loop and for each $x\in Q$, let $\alpha_x\in \Sym(Q)$
  be such that \eqref{Eq:osbA} holds. Then
  \[
  \alpha_x = R_x\inv L_x R_x = L_x R_x R_{x^{\lambda}} = L_{x^{\lambda}}\inv\,.
  \]
\end{lem}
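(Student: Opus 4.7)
The plan is to derive each of the three expressions for $\alpha_x$ from the defining identity \eqref{Eq:osbA} by well-chosen substitutions. The first expression $\alpha_x = R_x\inv L_x R_x$ is essentially already in the paragraph preceding the lemma: setting $y = 1$ in \eqref{Eq:osbA} gives $\alpha_x(z)\cdot x = x\cdot zx$, so $\alpha_x(z) = (x\cdot zx)/x$. The second expression $\alpha_x = L_x R_x R_{x^{\lambda}}$ falls out by substituting $z = x^{\lambda}$; since $x^{\lambda} x = 1$ in any loop, the left-hand side of \eqref{Eq:osbA} reduces to $\alpha_x(y)$ while the right-hand side is $x(y x^{\lambda} \cdot x) = L_x R_x R_{x^{\lambda}}(y)$.

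For the third expression $\alpha_x = L_{x^{\lambda}}\inv$, the plan is first to establish the auxiliary identity $x(x^{\lambda} z \cdot x) = zx$ for all $x, z \in Q$. Substituting $y = x^{\lambda}$ in \eqref{Eq:osbA} yields $\alpha_x(x^{\lambda})\cdot zx = x(x^{\lambda} z \cdot x)$; specialising further to $z = x$ collapses the right-hand side to $x(1 \cdot x) = x^2$, so right cancellation gives $\alpha_x(x^{\lambda}) = 1$, and feeding this back produces the auxiliary identity. Using the first expression together with that identity,
\[
  \alpha_x(x^{\lambda} y) \;=\; \frac{x \bigl( x^{\lambda} y \cdot x \bigr)}{x} \;=\; \frac{yx}{x} \;=\; y,
\]
where the second equality applies the auxiliary identity with $z := y$. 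This says $\alpha_x L_{x^{\lambda}} = \id_Q$, i.e.\ $\alpha_x = L_{x^{\lambda}}\inv$.

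The only place where anything needs to be noticed is the extraction of the auxiliary identity: the double substitution $y = x^{\lambda}$ followed by $z = x$ in \eqref{Eq:osbA} is what forces $\alpha_x(x^{\lambda}) = 1$. Once the auxiliary identity is in hand, the identification $\alpha_x = L_{x^{\lambda}}\inv$ is a one-line consequence of the first expression.
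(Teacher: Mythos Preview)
Your argument is correct and follows essentially the same route as the paper's proof, with one small slip and one cosmetic difference.

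The slip: for the first expression you should set $z=1$ (not $y=1$) in \eqref{Eq:osbA}. Setting $y=1$ there yields only $\alpha_x(1)\cdot zx = x(zx)$, hence $\alpha_x(1)=x$, which is not what you want. With $z=1$ you get $\alpha_x(y)\cdot x = x(yx)$, i.e.\ $\alpha_x = R_x\inv L_x R_x$ as claimed. (The preceding paragraph of the paper that you cite obtains the same conclusion, but from \eqref{Eq:osbtmp1} with $y=1$, which is a different equation.)

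The cosmetic difference: to get $\alpha_x(x^{\lambda})=1$ the paper simply evaluates the already-established formula $R_x\inv L_x R_x$ at $x^{\lambda}$, which is one line; your extra substitution $z=x$ works but is a detour. After that, both proofs derive the auxiliary identity $zx = x(x^{\lambda}z\cdot x)$ and read off $\alpha_x = L_{x^{\lambda}}\inv$ in the same way.
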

\begin{proof}
  We have already noted the first equality, which can be obtained from
  \eqref{Eq:osbA} by taking $z = 1$. Instead taking $z = x^{\lambda}$, we have
  $\alpha_x = L_x R_x R_{x^{\lambda}}$. Now set $y = x^{\lambda}$ and note that
  $\alpha_x(x^{\lambda}) = R_x\inv L_x R_x(x^{\lambda}) = 1$. Thus $zx =
  x(x^{\lambda}z\cdot x)$, that is, $R_x = L_x R_x L_{x^{\lambda}}$. Rearranging this,
  we have $L_{x^{\lambda}}\inv = R_x\inv L_x R_x$, which completes the proof.
\end{proof}

It is worth recording separately a consequence of the last equality
of the preceding lemma.

\begin{cor}\label{Cor:RRLL}
  Let $Q$ be an Osborn loop. For all $x\in Q$,
  \begin{equation}\label{Eq:inverse_tmp}
  R_x R_{x^{\lambda}} L_{x^{\lambda}} L_x = \id_Q\,.
  \end{equation}
\end{cor}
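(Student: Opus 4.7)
The plan is to read off the corollary directly from the last equality of \lref{Lem:alpha}. That lemma gives, among other expressions, $\alpha_x = L_x R_x R_{x^{\lambda}}$ and $\alpha_x = L_{x^{\lambda}}\inv$ as permutations of $Q$. Equating the two and composing on the left with $L_{x^{\lambda}}$ yields
\[
L_{x^{\lambda}} L_x R_x R_{x^{\lambda}} = \id_Q
\]
inside the symmetric group $\Sym(Q)$.

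The final step is just the observation that in any group a product of elements equals the identity if and only if any cyclic rearrangement of that product equals the identity: from $g_1 g_2 g_3 g_4 = \id$ one obtains $g_3 g_4 g_1 g_2 = \id$ by conjugating with $(g_1 g_2)\inv$ (equivalently, $g_3 g_4 = (g_1 g_2)\inv$). Applying this to the displayed relation with $(g_1,g_2,g_3,g_4) = (L_{x^{\lambda}}, L_x, R_x, R_{x^{\lambda}})$ gives exactly $R_x R_{x^{\lambda}} L_{x^{\lambda}} L_x = \id_Q$, which is \eqref{Eq:inverse_tmp}.

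There is really no obstacle here; the content of the corollary is already present in \lref{Lem:alpha}, and the only thing to do is to record it in the cyclically shifted form that will be convenient in the sequel (presumably to exhibit $L_{x^{\lambda}} L_x$ as the two-sided inverse of $R_x R_{x^{\lambda}}$, and vice versa).
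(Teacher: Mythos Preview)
Your proof is correct and is precisely the argument the paper intends: the corollary is stated as an immediate consequence of the last equality of \lref{Lem:alpha}, and you have simply spelled out the one-line rearrangement (equate $L_x R_x R_{x^{\lambda}} = L_{x^{\lambda}}\inv$, multiply by $L_{x^{\lambda}}$, and cyclically shift).
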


Now for all $x$ in an Osborn loop $Q$, we have
$\psi_{x^{\rho}}\inv = (L_x, R_{x^{\rho}}\inv,R_{x^{\rho}}\inv L_{x^{\rho}}\inv)$.
From \eqref{Eq:inverse_tmp}, we obtain $R_{x^{\rho}}\inv L_{x^{\rho}}\inv = R_x L_x$,
and so
\begin{equation}\label{Eq:psiI}
\psi_{x^{\rho}}\inv = (L_x, R_{x^{\rho}}\inv,R_x L_x)
\end{equation}
is an autotopism.

Let $Q$ be a loop and let $(Q^{\mathrm{op}},\ast)$ denote the opposite loop defined by $x\ast y := yx$,
with translations $L^{\mathrm{op}}_x := R_x$ and $R^{\mathrm{op}}_x := L_x$ and inverses $x^{\hat{\lambda}} = x^{\rho}$
and $x^{\hat{\rho}} = x^{\lambda}$. A triple $(\alpha,\beta,\gamma)$ is an autotopism of $Q$ if and only if
$(\beta,\alpha,\gamma)$ is an autotopism of $Q^{\mathrm{op}}$. Thus $\psi_x\in \atp(Q)$ if and only if
$( L^{\mathrm{op}}_x, {R^{\mathrm{op}}_{x^{\hat{\rho}}}}\inv, R^{\mathrm{op}}_x L^{\mathrm{op}}_x)
\in \atp(Q^{\mathrm{op}})$, that is, if and only if $\psi_{x^{\hat{\rho}}}\in \atp(Q^{\mathrm{op}})$.
Since we already have that $\psi_x\in \atp(Q)$ if and only if $\psi_{x^{\rho}}\in \atp(Q)$, we
conclude that \emph{a loop} $Q$ \emph{is an Osborn loop if and only if its opposite loop} $Q^{\mathrm{op}}$
\emph{is an Osborn loop}. In particular, any Osborn identity is equivalent to its mirror, which is the
corresponding identity in $Q^{\mathrm{op}}$.

Informally, if we had multiplied the triples
$(\alpha_x,\id{Q},\alpha_x)$ and $(\id{Q},\beta_x,\beta_x)$ in the opposite order and assumed
that the product is an autotopism, we would have obtained $\alpha_x = L_x$, the autotopism
$( L_x, \beta_x, R_x L_x )$, and various expressions for the permutation
$\beta_x$. This is precisely \eqref{Eq:psiI}, and and so we would have
been led to the same variety of loops.

The various forms of the autotopisms $\psi_x$ and $\psi_{x^{\rho}}$ lead to
corresponding equivalent identities.

\begin{thm}\label{Thm:osb_equivs}
  The following identities are equivalent in loops.
  \begin{enumerate}
    \item $(x\cdot yx)\rdv x\cdot zx = x(yz\cdot x)$;
    \item $x(yx^{\lambda}\cdot x)\cdot zx = x(yz\cdot x)$;
    \item $x^{\lambda}\ldv y\cdot zx = x(yz\cdot x)$;
    \item $xy\cdot x\ldv (xz\cdot x) = (x\cdot yz)x$;
    \item $xy\cdot (x\cdot x^{\rho}z)x = (x\cdot yz)x$;
    \item $xy\cdot z\rdv x^{\rho} = (x\cdot yz)x$;
    \item $x^{\lambda}\ldv (x^{\lambda}y\cdot z) = (y\cdot zx)\rdv x$;
    \item $x \ldv (xy\cdot z) = (y\cdot zx^{\rho})\rdv x^{\rho}$.
  \end{enumerate}
\end{thm}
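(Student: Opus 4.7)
The plan is to organize the eight identities into three groups---(1)--(3), (4)--(6), and (7)--(8)---and to pass between them using the three alternate expressions for $\alpha_x$ supplied by \lref{Lem:alpha}, the opposite-loop symmetry established just before the theorem, and a direct substitution argument connecting (3) and (7).

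First I would establish (1) $\iff$ (2) $\iff$ (3). Setting $z=1$ in each of (1), (2), (3) forces the leftmost factor on the left-hand side to equal $(x\cdot yx)/x$, so each implies the loop is Osborn in the sense of \eqref{Eq:osbA}; \lref{Lem:alpha} then supplies the other two expressions for $\alpha_x$ and the remaining identities follow by substitution. More directly, substituting $z = x^\lambda$ into (1) converts it into (2) (since $x^\lambda x = 1$), while substituting $y = x^\lambda$ yields $zx = x(x^\lambda z\cdot x)$, equivalently $L_{x^\lambda}^{-1} = R_x^{-1}L_xR_x$, which when inserted back into (1) gives (3). The analogous arguments performed in $Q^{\mathrm{op}}$ (or, equivalently, the mirror substitutions $y \in \{1, x^\rho\}$ combined with the three corresponding expressions for $\beta_x := R_{x^\rho}^{-1}$ recorded in the paragraphs leading to the theorem) give (4) $\iff$ (5) $\iff$ (6). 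Finally, (1) $\iff$ (4) is precisely the statement that $Q$ is Osborn iff $Q^{\mathrm{op}}$ is, which was verified just before the theorem.

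For (7), I plan to show (3) $\iff$ (7) directly. Setting $z = 1$ in (3) gives $(\dagger)$: $x^\lambda\backslash y = [x(yx)]/x$. Substituting $y \mapsto x^\lambda y$ in (3), and using $x^\lambda\backslash(x^\lambda y) = y$, yields $(\ddagger)$: $y\cdot zx = x((x^\lambda y\cdot z)x)$. Applying $(\dagger)$ with $w := x^\lambda y\cdot z$ in place of $y$ rewrites the right-hand side of $(\ddagger)$ as $[x^\lambda\backslash(x^\lambda y\cdot z)]\cdot x$, and right-division by $x$ produces (7). Conversely, setting $y = x$ in (7) recovers $(\dagger)$ (since $x^\lambda x = 1$); applying $(\dagger)$ to rewrite the left side of (7) then yields $(\ddagger)$; and substituting $y \mapsto x^\lambda\backslash y$ in $(\ddagger)$ (a bijective substitution, since $L_{x^\lambda}$ permutes $Q$) together with $x^\lambda\cdot(x^\lambda\backslash y) = y$ returns (3). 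The equivalence (7) $\iff$ (8) is then the substitution $x \mapsto x^\rho$ throughout, using $(x^\rho)^\lambda = x$ in every loop.

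The one place requiring real care is the step (3) $\iff$ (7): one must confirm that the substitutions $y \mapsto x^\lambda y$ and $y \mapsto x^\lambda\backslash y$ are bijections of $Q$ (they are, being the permutations $L_{x^\lambda}$ and $L_{x^\lambda}^{-1}$) and that no hidden associativity or alternativity sneaks in---only the defining loop cancellations $a\cdot(a\backslash b) = b$ and $(b/a)\cdot a = b$ are used. Everything else in the argument is either a direct substitution in an already-derived identity or an appeal to the opposite-loop symmetry.
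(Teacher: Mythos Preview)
Your proposal is correct and follows essentially the same route as the paper. The paper phrases the equivalence of (1)--(3) more tersely via the principle that any two components of an autotopism determine the third (so all three are the same condition $\psi_x\in\atp(Q)$ with different expressions for the first slot), handles (3)$\iff$(7) by the same substitution $y\mapsto x^{\lambda}y$ together with \lref{Lem:alpha} (your step~$(\dagger)$ is exactly the equality $L_{x^{\lambda}}^{-1}=R_x^{-1}L_xR_x$ from that lemma), and obtains (8) as the mirror of (7) rather than by your substitution $x\mapsto x^{\rho}$; both devices are valid and amount to the same thing.
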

\begin{proof}
  Any two components of an autotopism uniquely determine the third. Identities (1), (2) and (3) are equivalent
  because they all express that $\psi_x$ is an autotopism with different forms of the first component $\alpha_x$.
  Identities (4), (5) and (6) are the mirrors of (1), (2) and (3), respectively.

  Next, starting with (3), replace $y$ with $x^{\lambda} y$ to get
  $yz\cdot x = L_x R_x(x^{\lambda}y\cdot z)
  = R_x L_{x^{\lambda}}\inv(x^{\lambda}y\cdot z)$, using Lemma \ref{Lem:alpha}.
  Applying $R_x\inv$ to both sides, we obtain (7). The steps are reversible, so
  (3) is equivalent to (7). Finally, (8) is the mirror version of (7).
\end{proof}

\noindent Any one of the identities in Theorem \ref{Thm:osb_equivs} may be taken as
the definition of Osborn loops. When we use these in what follows, we shall often just refer to ``an Osborn identity'' rather than singling out the particular form.

From comparing the identities of Theorem \ref{Thm:osb_equivs} with the Moufang
identity $xy\cdot zx = x(yz\cdot x)$ or its mirror image, we see that any Moufang
loop is an Osborn loop. We now give a more thorough characterization.

A loop $Q$ is \emph{flexible} (FLX) if $xy\cdot x = x\cdot yx$ for all $x,y\in Q$.

\begin{thm}\label{Thm:osb_mfg}
Any of the following are necessary and sufficient for an Osborn loop $Q$
to be a Moufang loop: (i) LIP, (ii) RIP, (iii) FLX,
(iv) LAP, (v) RAP.
\end{thm}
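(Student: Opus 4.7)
The plan is to establish Moufang $\Rightarrow$ (i)--(v) via the standard facts that Moufang loops are diassociative (yielding LAP, RAP, FLX) and have the inverse property (yielding LIP, RIP), and then to prove each converse by substituting the assumed condition into a suitable form of the Osborn identity from \tref{Thm:osb_equivs}.

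Cases (i), (ii), (iii) should be one-step substitutions. In an LIP loop $L_{x^{\lambda}} = L_x\inv$, so $x^{\lambda}\ldv y = xy$; plugging this into identity (3) gives the Moufang identity $xy\cdot zx = x(yz\cdot x)$. Symmetrically, in an RIP loop $z\rdv x^{\rho} = zx$, so identity (6) collapses to $xy\cdot zx = (x\cdot yz)x$, the mirror Moufang identity; this suffices because both Osborn and Moufang form mirror-symmetric varieties. For FLX, $(x\cdot yx)\rdv x = xy$ turns identity (1) into Moufang on the nose.

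The LAP case requires a small additional computation. I would set $y = x$ in identity (1) to obtain
\[
\bigl((x\cdot xx)\rdv x\bigr)(zx) = x(xz\cdot x).
\]
LAP with $y$ replaced by $x$ gives $x\cdot xx = xx\cdot x$, so $(x\cdot xx)\rdv x = x^2$, and the identity becomes $x^2\cdot zx = x(xz\cdot x)$. A second application of LAP rewrites the left-hand side as $x(x\cdot zx)$; cancelling $L_x$ on both sides then yields $x\cdot zx = xz\cdot x$, i.e., FLX, which reduces to case (iii). Finally, (v) follows by applying the LAP argument to $Q^{\mathrm{op}}$ and using the mirror symmetry of Osborn and Moufang.

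The only real obstacle is the LAP case: one has to spot that the substitution $y = x$ in identity (1) produces precisely those expressions on which LAP acts coherently on both sides, so that a single cancellation extracts flexibility. The other four cases are immediate once the right identity from \tref{Thm:osb_equivs} is selected.
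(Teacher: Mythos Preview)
Your proof is correct and follows essentially the same approach as the paper: necessity via diassociativity, sufficiency of (i)--(iii) by direct substitution into the appropriate Osborn identity, and sufficiency of (iv) by showing that LAP forces FLX via the substitution $y=x$ (with (v) handled by duality). The only cosmetic difference is that for case (iv) the paper uses identity (3) together with \lref{Lem:alpha} to compute $x^{\lambda}\ldv x = xx$, whereas you reach the same intermediate equation $x^2\cdot zx = x(xz\cdot x)$ by substituting $y=x$ directly into identity (1); these are the same computation phrased in two of the equivalent forms of $\alpha_x$.
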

\begin{proof}
  Moufang loops are \emph{diassociative}, i.e., any $2$-generated subloop is
  associative. Conditions (i)--(v) are all particular instances of diassociativity
  so their necessity is clear.

  For sufficiency, (i), (ii) and (iii) are immediate from identities in Theorem \ref{Thm:osb_equivs}.
  For (iv): If LAP holds, then for all $x\in Q$,
  $x^{\lambda}\ldv x = L_{x^{\lambda}}\inv (x) = R_x\inv L_x R_x(x) =
  (x\cdot xx)\rdv x = (xx\cdot x)\rdv x = xx$. Thus for all $z\in Q$,
  $x(x\cdot zx) = xx\cdot zx = x^{\lambda}\ldv x\cdot zx = x(xz\cdot x)$ using
  an Osborn identity in the last equality. Cancelling we have $x\cdot zx
  = xz\cdot x$, that is, FLX holds. The proof of the sufficiency of (v)
  is dual to this.
\end{proof}

Other instances of diassociativity are also sufficient for an Osborn loop
to be a Moufang loop. For example, a loop $Q$ satisfies the
\emph{antiautomorphic inverse property} if $x^{\lambda} = x^{\rho}$ and
$(xy)^{\rho} = y^{\rho}x^{\rho}$ for all $x,y\in Q$. We omit the proof that an AAIP Osborn
loop is Moufang as it is a little more involved than the proofs of the five cases of
Theorem \ref{Thm:osb_mfg}.

As we will see, an instance of diassociativity which is not sufficient for
an Osborn loop $Q$ to be Moufang is the \emph{weak inverse property} (WIP):
$x (yx)^{\rho} = y^{\rho}$ or equivalently, $(xy)^{\lambda}x = y^{\lambda}$ for all $x,y\in Q$.

\medskip

The origin of Osborn loops lies in a paper of Osborn \cite{osborn},
who studied loops in which WIP holds in every loop isotope. He proved that
such a loop must satisfy the identity $xy\cdot \theta_x(z)x = (x\cdot yz)x$
where for each $x$, $\theta_x$ is a permutation.
Basarab \cite{basarab1} dubbed a loop satisfying the identity an Osborn
loop. In the same paper, Basarab also introduced generalized Moufang loops,
which we discuss further below. These turn out to be precisely WIP Osborn loops,
but not every Osborn loop has the WIP.

Independently and two years after Basarab's paper appeared, Huthnance also
studied what are now called Osborn loops in his PhD dissertation \cite{Huthnance}.
By an amusing coincidence, Huthnance reversed Basarab's terminology by using
``Osborn loops'' to refer to Basarab's generalized Moufang loops and
``generalized Moufang loops'' to refer to Basarab's Osborn loops. We follow
Basarab since his paper appeared first and other papers have since been published
following his convention \cite{basarab2,basarab3,basarab4,hruza}.
Many, but not all of the results in the remainder of this section can be found
in Basarab's papers.

\smallskip

If $(\alpha,\beta,\gamma)\in \atp(Q)$ is such that $\beta(1)=1$, then it is
well known and easy to show that $\alpha = \gamma = L_c \beta$ where $c = \alpha(1)$.
In this case, $\beta$ is called a \emph{left pseudoautomorphism} with \emph{companion} $c$.
A left pseudoautomorphism is an automorphism if and only if the companion lies in the
left nucleus.

Dually, if $(\alpha,\beta,\gamma)\in \atp(Q)$ is such that $\alpha(1) = 1$, then
$\beta = \gamma = R_c \alpha$ where $c = \beta(1)$. In this case, $\alpha$ is called
a \emph{right pseudoautomorphism} with \emph{companion} $c$.

A loop $Q$ is a $G$-\emph{loop} if it is isomorphic to all of its loop isotopes.
For instance, groups or more generally, conjugacy closed loops are $G$-loops.
It is well known that a loop is a $G$-loop if and only if each element occurs
as a companion of some left pseudoautomorphism and of some right pseudoautomorphism.

\begin{lem}
  Let $Q$ be a $G$-loop, i.e., for each $x\in Q$, there exists a left pseudoautomorphism
  $\varphi_x$ and a right pseudoautomorphism $\psi_x$, each with companion $x$.
  Assume further that $\varphi_x \psi_x = \psi_x \varphi_x = \id_Q$ and
  $\varphi_x(x) = x = \psi_x(x)$. Then $Q$ is an Osborn loop.
\end{lem}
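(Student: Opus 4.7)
The plan is to rewrite the pseudoautomorphism hypotheses as autotopisms and combine them so as to reconstruct the defining Osborn autotopism $\psi_x = (\alpha_x, R_x, L_x R_x)$ of \lref{Lem:alpha}. The guiding idea is that $\varphi_x\psi_x = \id_Q$ should allow the middle components of the two pseudoautomorphism autotopisms to collapse to exactly $R_x$, which is what sits in the middle of the Osborn autotopism.

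By the definition of companion, $\varphi_x$ being a left pseudoautomorphism with companion $x$ is equivalent to
\[
A := (L_x\varphi_x,\, \varphi_x,\, L_x\varphi_x) \in \atp(Q),
\]
and $\psi_x$ being a right pseudoautomorphism with companion $x$ is equivalent to
\[
B := (\psi_x,\, R_x\psi_x,\, R_x\psi_x) \in \atp(Q).
\]
Forming the product $BA$ componentwise yields
\[
BA = (\psi_x L_x\varphi_x,\, R_x\psi_x\varphi_x,\, R_x\psi_x L_x\varphi_x),
\]
and the hypothesis $\psi_x\varphi_x = \id_Q$ reduces the middle component to $R_x$.

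To identify the third component I would invoke the standard fact (recalled just before the lemma) that in any autotopism $(\alpha,\beta,\gamma)$ one has $\gamma = L_{\alpha(1)}\beta$, obtained by setting $y=1$ in $\alpha(y)\beta(z) = \gamma(yz)$. Here $(\psi_x L_x\varphi_x)(1) = \psi_x(L_x(1)) = \psi_x(x) = x$, using $\varphi_x(1) = 1$ (automatic from the definition of left pseudoautomorphism) and the hypothesis $\psi_x(x) = x$. Consequently the third component equals $L_x R_x$, so
\[
BA = (\alpha_x,\, R_x,\, L_x R_x), \qquad \alpha_x := \psi_x L_x\varphi_x \in \Sym(Q),
\]
which is the Osborn autotopism; reading off the corresponding equation is precisely \eqref{Eq:osbA}, so $Q$ is Osborn. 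There is no serious obstacle here — the whole proof is essentially a single line of autotopism arithmetic. One can note afterwards that $\varphi_x(x)=x$ is not actually used in the computation, being redundant with $\psi_x(x)=x$ given $\psi_x\varphi_x = \id_Q$, and that the $\psi_x$ in the hypothesis (a right pseudoautomorphism) is a permutation unrelated to the Osborn autotopism $\psi_x$ of \lref{Lem:alpha}.
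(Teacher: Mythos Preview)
Your proof is correct and follows essentially the same approach as the paper: both multiply the two pseudoautomorphism autotopisms to obtain an Osborn autotopism. The only cosmetic difference is the order of multiplication --- the paper forms $AB$ and lands on the form $(L_x,\beta_x,R_xL_x)$ of \eqref{Eq:psiI} (computing the third component via the left-pseudoautomorphism property of $\varphi_x$ and using $\varphi_x(x)=x$), whereas you form $BA$ and land directly on $(\alpha_x,R_x,L_xR_x)$ using $\psi_x(x)=x$; your observation that $\varphi_x(x)=x$ is then redundant is accurate.
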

\begin{proof}
Multiplying the autotopisms $( L_x \varphi_x, \varphi_x, L_x\varphi_x)$ and
$( \psi_x, R_x \psi_x, R_x \psi_x )$, we get that
$( L_x, \varphi_x L_x \psi_x, L_x \varphi_x R_x \psi_x )$ is an autotopism.
For each $y\in Q$, $L_x \varphi_x R_x \psi_x(y) =
L_x \varphi_x (\psi_x(y)\cdot x) = L_x \varphi_x \psi_x(y)\cdot \varphi_x(x)
= xy\cdot x$ since $\varphi$ is a left pseudoautomorphism and using the
assumptions of the lemma. Thus $L_x \varphi_x R_x \psi_x = R_x L_x$,
and so for each $x\in Q$, $( L_x, \varphi_x L_x \psi_x, R_x L_x )$ is
an autotopism. This autotopism has the form $(L_x, \beta_x, R_x L_x)$
for a permutation $\beta_x$ and so $Q$ is an Osborn loop.
\end{proof}

\begin{cor}\label{Cor:CC}
  Every conjugacy closed loop is an Osborn loop.
\end{cor}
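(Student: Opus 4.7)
The plan is to deduce this corollary as an immediate application of the preceding lemma, taking $\varphi_x := L_x\inv R_x$ and $\psi_x := R_x\inv L_x$ for each $x\in Q$; these two maps are manifestly mutually inverse.

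To recognize them as pseudoautomorphisms of the correct type, I would appeal to nuclear identification. The LCC identity $(xy\rdv x)\cdot xz = x(yz)$ is exactly the statement that $\mu_x\lambda_x = (R_x\inv L_x, L_x, L_x) \in \atp(Q)$. Since its first component sends $1$ to $1$ and its second and third components coincide, the discussion preceding the lemma identifies $\psi_x = R_x\inv L_x$ as a right pseudoautomorphism with companion $L_x(1) = x$. By the mirror argument using RCC, the triple $(R_x, L_x\inv R_x, R_x)$ is an autotopism of $Q$; here the middle component sends $1$ to $1$, exhibiting $\varphi_x = L_x\inv R_x$ as a left pseudoautomorphism with companion $R_x(1) = x$. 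The remaining hypotheses of the lemma are then routine: $\varphi_x\psi_x = L_x\inv R_x R_x\inv L_x = \id_Q$ and symmetrically $\psi_x\varphi_x = \id_Q$, while $\psi_x(x) = R_x\inv(x\cdot x) = R_x\inv R_x(x) = x$ and dually $\varphi_x(x) = x$.

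There is no real obstacle. The only substantive step is recognizing that the LCC and RCC defining autotopisms are precisely of the form required to produce, for every $x$, a pair of pseudoautomorphisms of opposite handedness that are mutually inverse, fix $x$, and have companion $x$; this is exactly the data consumed by the preceding lemma, so the conclusion that $Q$ is Osborn follows at once.
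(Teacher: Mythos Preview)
Your proof is correct and essentially identical to the paper's: both apply the preceding lemma with $\psi_x = T_x = R_x^{-1}L_x$ and $\varphi_x = T_x^{-1} = L_x^{-1}R_x$, using the autotopism characterizations of LCC and RCC to recognize these as right and left pseudoautomorphisms with companion $x$. The only cosmetic difference is that you unpack the relevant autotopisms via the nuclear-identification triples $\mu_x\lambda_x$ and its mirror, whereas the paper simply quotes the pseudoautomorphism characterization of CC loops directly.
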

\begin{proof}
  A loop $Q$ is conjugacy closed if and only if for each $x\in Q$,
  $T_x = R_x\inv L_x$ is a right pseudoautomorphism with companion $x$
  (this is LCC) and $T_x\inv$ is a left pseudoautomorphism with
  companion $x$ (this is RCC). CC-loops are well known to be $G$-loops.
  Since $T_x(x) = x = T_x\inv(x)$, the lemma applies.
\end{proof}

It is, in fact, easy to show from working directly with the autotopisms
defining LCC and RCC loops that an Osborn loop is CC if and only if
it is LCC if and only if it is RCC.

Basarab \cite{basarab5} defined a loop $Q$ to be a \emph{VD}-loop (probably named after
Valentin Danilovich Belousov) if every $T_x$ is a \emph{left} pseudoautomorphism
with companion $x$ and every $T_x\inv$ is a \emph{right} pseudoautomorphism with
companion $x$. Thus a VD-loop is defined by the identity
$x(xy\rdv x)\cdot (xz\rdv x) = x((x\cdot yz)\rdv x)$ and its mirror.
Every VD-loop is a $G$-loop, as follows from the characterization
of $G$-loops stated above. Moufang loops with nuclear fourth powers
and CC-loops with nuclear squares are VD-loops.

\begin{cor}[\cite{basarab5}]
  Every VD-loop is an Osborn loop.
\end{cor}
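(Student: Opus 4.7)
The plan is to reduce the corollary to the preceding lemma, which already does all the genuine work: it manufactures an Osborn autotopism from a compatible pair consisting of a left pseudoautomorphism $\varphi_x$ and a right pseudoautomorphism $\psi_x$, each with companion $x$, subject to two simple auxiliary conditions. So my proof would simply exhibit, for each $x\in Q$, explicit choices of $\varphi_x$ and $\psi_x$ satisfying all the hypotheses of that lemma.

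The natural choice dictated by the definition of a VD-loop is $\varphi_x := T_x = R_x\inv L_x$ and $\psi_x := T_x\inv$. By the very definition of a VD-loop, $T_x$ is a left pseudoautomorphism with companion $x$ and $T_x\inv$ is a right pseudoautomorphism with companion $x$, which covers the pseudoautomorphism hypotheses of the lemma for free. The inverse condition $\varphi_x\psi_x = \psi_x\varphi_x = \id_Q$ is automatic from the choice $\psi_x = \varphi_x\inv$. The only remaining condition to check is $\varphi_x(x) = \psi_x(x) = x$, which reduces to the one-line observation that $T_x(x) = R_x\inv L_x(x) = R_x\inv(x \cdot x) = x$, since $x$ is the unique solution of $y\cdot x = xx$; applying $T_x\inv$ to both sides yields $\psi_x(x)=x$ as well.

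I do not anticipate any real obstacle here: the two nontrivial pseudoautomorphism properties are encoded in the definition of a VD-loop, and the fixed-point identity $T_x(x)=x$ is trivial. The substantive content lies entirely in the preceding lemma; the corollary is a matter of recognizing that the VD hypothesis is precisely what is needed to invoke it.
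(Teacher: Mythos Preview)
Your proposal is correct and matches the paper's approach: the paper gives no separate proof for this corollary, relying on the obvious parallel with the proof of Corollary~\ref{Cor:CC} (for CC-loops one takes $\varphi_x=T_x\inv$, $\psi_x=T_x$; for VD-loops the roles of $T_x$ and $T_x\inv$ are reversed, exactly as you do). The verification that $T_x(x)=x$ and that the inverse condition holds is identical to what you wrote.
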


As discussed above, a \emph{generalized Moufang loop} is a WIP Osborn loop.
Such loops are characterized by the identity
$x(yz\cdot x) = (y^{\lambda}x^{\lambda})^{\rho}\cdot zx$ or its equivalent mirror. Indeed,
suppose this identity holds. Then for each $x$,
$( \rho R_{x^{\lambda}} \lambda, R_x, L_x R_x )$ is an autotopism. This has the form
$(\alpha_x, R_x, L_x R_x)$ for a permutation $\alpha_x$ and
so $\rho R_{x^{\lambda}} \lambda = L_x\inv$ by Lemma \ref{Lem:alpha}. This is precisely
WIP. The steps are clearly reversible, establishing the desired
characterization.

Osborn showed that for what we now call a generalized Moufang loop $Q$, the
factor loop $Q/N(Q)$ is a Moufang loop.
Basarab \cite{basarab1} proved that every isotope of a generalized Moufang
loop is a generalized Moufang loop. Every WIP CC-loop is a
generalized Moufang loop. In fact, combining results of Basarab  and
Goodaire and Robinson \cite{gr}, WIP CC-loops are precisely generalized
Moufang loops with every square in the nucleus. A short proof of the
latter fact appears in \secref{4}, cf.~\tref{43}. Generalized Moufang loops
satisfy a suitably generalized version of Moufang's theorem \cite{basarab_mfg}.

One can certainly obtain
generalized Moufang loops which are neither CC-loops nor Moufang loops
by, say, taking the direct product of a WIP CC-loop which is not an extra loop
and a Moufang loop which is not an extra loop. The smallest example given by this
construction is obtained from the nonassociative CC-loop of order $6$
and the nonassociative Moufang loop of order $12$. However it is not
known what is the order of the smallest generalized Moufang loop which is neither
a Moufang loop nor a CC-loop, nor has there ever been any effort at a classification of
generalized Moufang loops of small orders.

\medskip

Let $Q$ be a loop. Recall that the \emph{multiplication group} of $Q$ is
the permutation group $\mlt(Q) = \langle L_x,R_x \midq x \in Q \rangle$.
The \emph{left} and \emph{right multiplication groups} of $Q$ are, respectively,
$\mlt_{\lambda}(Q) = \langle L_x\midq x\in Q\rangle$ and
$\mlt_{\rho}(Q) = \langle R_x\midq x\in Q\rangle$.

\begin{thm}\label{Thm:normal}
  Let $Q$ be an Osborn loop. Then $\mlt_{\lambda}(Q)$ and $\mlt_{\rho}(Q)$ are
  normal subgroups of $\mlt(Q)$.
\end{thm}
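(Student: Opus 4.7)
The plan is to establish $\mlt_\lambda(Q) \trianglelefteq \mlt(Q)$ directly from the autotopism $\psi_x = (\alpha_x, R_x, L_xR_x)$, where $\alpha_x = L_{x^{\lambda}}^{-1}$ by \lref{Lem:alpha}, and then to derive the normality of $\mlt_\rho(Q)$ by the mirror principle: since $Q$ is Osborn if and only if $Q^{\mathrm{op}}$ is Osborn (as shown just after \eqref{Eq:psiI}), applying the $\mlt_\lambda$ statement to $Q^{\mathrm{op}}$ and then translating via $\mlt_\lambda(Q^{\mathrm{op}}) = \mlt_\rho(Q)$ and $\mlt(Q^{\mathrm{op}}) = \mlt(Q)$ gives the desired conclusion. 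So the real content is the $\mlt_\lambda$ case.

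Since $\mlt(Q)$ is generated by $\{L_x, R_x : x\in Q\}$ and conjugation by $L_x^{\pm 1}$ trivially preserves $\mlt_\lambda$, the task reduces to verifying $R_x L_y R_x^{\pm 1} \in \mlt_\lambda(Q)$ for all $x, y \in Q$. (Both signs must be handled because $\mlt(Q)$ need not be finite; this is the one subtlety to be careful about.) The autotopism identity $\alpha(y)\beta(z) = \gamma(yz)$ is equivalent to the operator identity $L_{\alpha(y)}\beta = \gamma L_y$; applied to $\psi_x$ this produces $L_{\alpha_x(y)}R_x = L_xR_xL_y$, and rearranging yields
\[
R_x L_y R_x^{-1} = L_x^{-1} L_{\alpha_x(y)} \in \mlt_\lambda(Q).
\]

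For the reverse conjugation I would exploit that $\alpha_x = L_{x^{\lambda}}^{-1}$ is a bijection of $Q$: substituting $\alpha_x^{-1}(y)$ for $y$ in the identity just derived gives $L_y = L_x R_x L_{\alpha_x^{-1}(y)} R_x^{-1}$, and therefore
\[
R_x^{-1} L_y R_x = (R_x^{-1} L_x R_x)\, L_{\alpha_x^{-1}(y)} = \alpha_x L_{\alpha_x^{-1}(y)} = L_{x^{\lambda}}^{-1} L_{\alpha_x^{-1}(y)} \in \mlt_\lambda(Q).
\]
Both inclusions together yield $\mlt_\lambda(Q) \trianglelefteq \mlt(Q)$, and $\mlt_\rho(Q) \trianglelefteq \mlt(Q)$ then follows by the mirror argument. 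I do not anticipate a serious obstacle: the first conjugation identity is an immediate rearrangement of the autotopism $\psi_x$, and the only point requiring a little care is the $R_x^{-1}$-conjugation, which the explicit formula $\alpha_x = L_{x^{\lambda}}^{-1}$ from \lref{Lem:alpha} supplies via the bijectivity substitution.
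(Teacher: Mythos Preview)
Your proposal is correct and follows essentially the same route as the paper: both reduce to showing $R_x^{\pm 1} L_y R_x^{\mp 1}\in\mlt_\lambda(Q)$, and both obtain the identical formulas $R_x^{-1}L_yR_x=L_{x^\lambda}^{-1}L_{x^\lambda y}$ and $R_xL_yR_x^{-1}=L_x^{-1}L_{x^\lambda\ldv y}$ (your $\alpha_x^{-1}(y)=x^\lambda y$ and $\alpha_x(y)=x^\lambda\ldv y$). The only cosmetic differences are that the paper cites these as the translation form of identities (7) and (8) of \tref{Thm:osb_equivs} rather than deriving them from the operator identity $L_{\alpha(y)}\beta=\gamma L_y$, and that the paper treats $\mlt_\rho$ in parallel rather than invoking the opposite-loop principle; neither changes the substance.
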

\begin{proof}
  It is sufficient to show that for
  all $x,y\in Q$, $R_x\inv L_y R_x, R_x L_y R_x\inv\in \mlt_{\lambda}(Q)$ and
  $L_x\inv R_y L_x, L_x R_y L_x\inv\in \mlt_{\rho}(Q)$.
  The first follow immediately from writing Theorem \ref{Thm:osb_equivs}(7,8) in terms of translations:
  \begin{align}
  R_x\inv L_y R_x &= L\inv_{x^{\lambda}}L_{x^{\lambda}y}\,, \label{Eq:normal-tmp1} \\
  L_x\inv R_y L_x &= R\inv_{x^{\rho}} R_{yx^{\rho}}\,. \label{Eq:normal-tmp2}
  \end{align}
  Next rearrange \eqref{Eq:normal-tmp1} to
  get $R_x L_{x^{\lambda}y} R_x\inv = R_x L_{x^{\lambda}} R_x\inv L_y$. From Lemma
  \ref{Lem:alpha}, we have $R_x L_{x^{\lambda}} R_x\inv = L_x\inv$. Replacing $y$ with
  $x^{\lambda}\ldv y$, we get $R_x L_y R_x\inv = L_x\inv L_{x^{\lambda}\ldv y}$.
  Similarly, $L_x R_y L_x\inv = R_x\inv R_{y\rdv x^{\rho}}$, completing the proof.
\end{proof}

The \emph{left} and \emph{right inner mapping groups} $\inn_{\lambda}(Q)$ and
$\inn_{\rho}(Q)$
of a loop $Q$ are the stabilizers of $1$ in $\mlt_{\lambda}(Q)$ and $\mlt_{\rho}(Q)$,
respectively. In terms of generators, it turns out that
$\inn_{\lambda}(Q) = \langle L_{xy}\inv L_x L_y\midq x,y\in Q\rangle$ and
$\inn_{\rho}(Q) = \langle R_{yx}\inv R_x R_y\midq x,y\in Q\rangle$.

\begin{thm}\label{Thm:inner}
    Let $Q$ be an Osborn loop. Then for all $x,y\in Q$,
    \[
    [L_y, R_x] = (L_{x^{\lambda}y}\inv L_{x^{\lambda}} L_y)\inv = R\inv_{xy^{\rho}} R_{y^{\rho}}R_x\,.
    \]
    Therefore $\inn_{\lambda}(Q) = \langle [L_x, R_y] \midq x,y\in Q\rangle = \inn_{\rho}(Q)$.
\end{thm}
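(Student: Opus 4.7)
The plan is to derive the two displayed equalities by direct rearrangement of the commutation formulas \eqref{Eq:normal-tmp1} and \eqref{Eq:normal-tmp2} that were established in the proof of Theorem \ref{Thm:normal}, and then to use the resulting expressions both to locate $[L_y,R_x]$ inside the two inner mapping groups and to show that every standard generator of each group arises in this form.

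With the convention $[L_y,R_x] = L_y\inv R_x\inv L_y R_x$, substituting the identity $R_x\inv L_y R_x = L_{x^{\lambda}}\inv L_{x^{\lambda}y}$ of \eqref{Eq:normal-tmp1} gives
\[
[L_y,R_x] = L_y\inv L_{x^{\lambda}}\inv L_{x^{\lambda}y} = (L_{x^{\lambda}y}\inv L_{x^{\lambda}} L_y)\inv,
\]
which is the first displayed equality. For the second, swap the roles of $x$ and $y$ in \eqref{Eq:normal-tmp2} to get $L_y\inv R_x L_y = R_{y^{\rho}}\inv R_{xy^{\rho}}$, invert, and multiply on the right by $R_x$:
\[
[L_y,R_x] = R_{xy^{\rho}}\inv R_{y^{\rho}} R_x.
\]

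These two expressions display $[L_y,R_x]$ as the inverse of a standard generator of $\inn_{\lambda}(Q)$ and as a standard generator of $\inn_{\rho}(Q)$, which yields $\langle [L_x,R_y]\midq x,y\in Q\rangle \subseteq \inn_{\lambda}(Q) \cap \inn_{\rho}(Q)$. For the reverse inclusions, an arbitrary standard generator $L_{uv}\inv L_u L_v$ of $\inn_{\lambda}(Q)$ is realized as $[L_v, R_{u^{\rho}}]\inv$ by setting $x = u^{\rho}$ (so that $x^{\lambda} = u$) and $y = v$ in the first identity; an arbitrary standard generator $R_{vu}\inv R_u R_v$ of $\inn_{\rho}(Q)$ is realized as $[L_{u^{\lambda}}, R_v]$ by setting $y = u^{\lambda}$ and $x = v$ in the second. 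Since $u\mapsto u^{\rho}$ and $u\mapsto u^{\lambda}$ are bijections of $Q$, every standard generator of each inner mapping group is captured, yielding both equalities $\inn_{\lambda}(Q) = \langle [L_x,R_y]\midq x,y\in Q\rangle = \inn_{\rho}(Q)$.

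There is no real obstacle: the whole argument is a short accounting exercise on top of the commutation relations extracted in Theorem \ref{Thm:normal}. The only point worth flagging is the striking coincidence $\inn_{\lambda}(Q) = \inn_{\rho}(Q)$, which is forced by the fact that a single family of commutators $[L_y, R_x]$ simultaneously enumerates the standard generators of both groups, once the harmless reindexing by $x\mapsto x^{\lambda}$ or $x\mapsto x^{\rho}$ is applied.
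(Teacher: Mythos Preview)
Your proof is correct and follows essentially the same approach as the paper: both derive the two displayed equalities by the same rearrangements of \eqref{Eq:normal-tmp1} and \eqref{Eq:normal-tmp2}, and both conclude the equality of the inner mapping groups from the resulting generator descriptions. You are simply more explicit than the paper in verifying that every standard generator of $\inn_{\lambda}(Q)$ and $\inn_{\rho}(Q)$ is actually hit via the reindexings $x = u^{\rho}$ and $y = u^{\lambda}$; the paper leaves this to the reader.
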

\begin{proof}
  The first equality follows from multiplying \eqref{Eq:normal-tmp1} on the left by $L_y\inv$.
  The second equality follow from exchanging $x$ and $y$ in \eqref{Eq:normal-tmp2}, multiplying
  on the left by $R_x\inv$ and then taking inverses of both sides.
  The remaining assertion follows
  from the characterizations of the left and right inner mapping groups in terms of generators.
\end{proof}

\begin{lem}\label{Lem:nucnorm}
  Let $Q$ be a loop. If $\mlt_{\lambda}(Q)\unlhd \mlt(Q)$, then the right nucleus of $Q$ is
  a normal subloop. If $\mlt_{\rho}(Q)\unlhd \mlt(Q)$, then the left nucleus of $Q$ is
  a normal subloop.
\end{lem}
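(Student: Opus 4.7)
The plan is to identify the centralizer $C := C_{\mlt(Q)}(\mlt_{\lambda}(Q))$ with precisely the right translations by elements of $N_{\rho}$, and then to exploit the fact that the centralizer of a normal subgroup is normal.

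The first, and in my view main, step is the identification
\[
 C_{\mlt(Q)}(\mlt_{\lambda}(Q)) \;=\; \{R_a : a \in N_{\rho}\}.
\]
For the non-obvious inclusion, suppose $\psi \in C$. Then $\psi L_x = L_x \psi$ for every $x \in Q$; evaluating at $1$ yields $\psi(x) = x \cdot \psi(1)$, so already $\psi = R_{\psi(1)}$. The commutation condition $R_{\psi(1)} L_x = L_x R_{\psi(1)}$ then rewrites as $(xy)\psi(1) = x(y\psi(1))$ for all $x,y\in Q$, which is exactly $\psi(1)\in N_{\rho}$. The reverse inclusion is just the defining property of $N_{\rho}$. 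In particular $C$ is a subgroup of $\mlt(Q)$ whose action on $1$ is a bijection onto $N_{\rho}$.

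Given this, the rest is quick. By hypothesis $\mlt_{\lambda}(Q) \unlhd \mlt(Q)$, so its centralizer $C$ is also normal in $\mlt(Q)$. Hence for any $\varphi \in \inn(Q) \subseteq \mlt(Q)$ and any $a \in N_{\rho}$, the element $\varphi R_a \varphi\inv$ lies in $C$ and therefore equals $R_b$ for a uniquely determined $b \in N_{\rho}$. Evaluating at $1$ and using $\varphi(1) = \varphi\inv(1) = 1$ gives
\[
b \;=\; R_b(1) \;=\; \varphi R_a \varphi\inv(1) \;=\; \varphi(a),
\]
so $\varphi(a) \in N_{\rho}$. Since this holds for every inner mapping, $N_{\rho}$ is invariant under $\inn(Q)$, hence $N_{\rho} \unlhd Q$.

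The second assertion, concerning normality of $N_{\lambda}$ under the hypothesis $\mlt_{\rho}(Q) \unlhd \mlt(Q)$, is proved by the mirror argument: one shows $C_{\mlt(Q)}(\mlt_{\rho}(Q)) = \{L_a : a \in N_{\lambda}\}$ by evaluating a centralizing permutation at $1$, and then argues exactly as above. The only conceptual point requiring care is the identification of the centralizer in the first step; every other manipulation is forced.
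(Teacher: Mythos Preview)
Your argument is correct. The identification $C_{\mlt(Q)}(\mlt_{\lambda}(Q)) = \{R_a : a \in N_{\rho}\}$ is exactly the right observation, and the passage from normality of the centralizer to $\inn(Q)$-invariance of $N_{\rho}$ is clean. One could add a word noting that $N_{\rho}$ is a subloop (indeed an associative one), but this is standard and implicit.

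As for comparison: the paper does not actually prove this lemma; it simply cites \cite{drapal-lcc}, Lemma~1.5. The argument there is essentially the one you have found---the right nucleus is recovered as the orbit of $1$ under the centralizer of $\mlt_{\lambda}(Q)$, and normality of that centralizer (as the centralizer of a normal subgroup) yields invariance under inner mappings. So your proof is not merely correct but coincides with the intended route.
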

\begin{proof}
  See (\cite{drapal-lcc}, Lemma 1.5).
\end{proof}

\begin{thm}\label{Thm:nucleus}
  Let $Q$ be an Osborn loop. Then the three nuclei of $Q$ coincide
  and the nucleus is a normal subloop.
\end{thm}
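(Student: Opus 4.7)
The plan is to prove that the three nuclei coincide and then invoke the normality results already on the table. Combining \tref{Thm:normal} with \lref{Lem:nucnorm} immediately gives $N_{\lambda} \unlhd Q$ and $N_{\rho} \unlhd Q$, so once $N_{\lambda} = N_{\mu} = N_{\rho}$ is established the normality of the common nucleus is automatic. Throughout I will use two elementary characterizations: $a \in N_{\lambda}$ iff $[L_a, R_x] = \id_Q$ for all $x \in Q$, and $a \in N_{\mu}$ iff $L_{xa} = L_x L_a$ for all $x \in Q$.

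The forward step $N_{\lambda} \subseteq N_{\mu}$ is short. If $a \in N_{\lambda}$ then $[L_a, R_x] = \id_Q$, and the commutator formula from \tref{Thm:inner} gives $[L_a, R_x] = (L_{x^{\lambda} a}^{-1} L_{x^{\lambda}} L_a)^{-1}$, so $L_{x^{\lambda}} L_a = L_{x^{\lambda} a}$ for every $x \in Q$. Since $x \mapsto x^{\lambda}$ is a bijection of $Q$, this reads $L_u L_a = L_{ua}$ for all $u \in Q$, which is exactly $a \in N_{\mu}$.

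The reverse inclusion $N_{\mu} \subseteq N_{\lambda}$ is the step I expect to be the main obstacle, since \tref{Thm:inner} does not visibly run backwards. The idea is to lean instead on identity (8) of \tref{Thm:osb_equivs}, which, when read as a permutation identity in the variable $z$, is equivalent to
\[
L_x^{-1} L_{xy} = R_{x^{\rho}}^{-1} L_y R_{x^{\rho}}.
\]
If $a \in N_{\mu}$, the left-hand side with $y = a$ collapses to $L_a$ by the characterization of $N_{\mu}$, and the identity becomes $L_a = R_{x^{\rho}}^{-1} L_a R_{x^{\rho}}$, i.e.\ $[L_a, R_{x^{\rho}}] = \id_Q$ for every $x \in Q$. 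Since $x \mapsto x^{\rho}$ is a bijection of $Q$, $L_a$ commutes with every right translation, so $a \in N_{\lambda}$.

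This gives $N_{\lambda} = N_{\mu}$. The matching equality $N_{\mu} = N_{\rho}$ then follows by applying the same argument inside the opposite loop $Q^{\mathrm{op}}$, which is again Osborn by the observation preceding \tref{Thm:osb_equivs}; under $Q \mapsto Q^{\mathrm{op}}$ the middle nucleus is self-dual, while $N_{\lambda}$ and $N_{\rho}$ interchange. Thus $N_{\lambda} = N_{\mu} = N_{\rho}$, and this common nucleus is normal because $N_{\lambda}$ already is.
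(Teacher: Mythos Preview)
Your argument is correct, and for normality it coincides exactly with the paper's proof (Theorem~\ref{Thm:normal} plus Lemma~\ref{Lem:nucnorm}). For the equality of the nuclei your route is a hands-on unpacking of the paper's one-line argument, which simply observes that $N_{\rho}$, $N_{\mu}$, $N_{\lambda}$ are the common fixed-point sets of $\inn_{\lambda}(Q)$, $\langle[L_x,R_y]\rangle$, $\inn_{\rho}(Q)$ respectively, and that these three groups coincide by Theorem~\ref{Thm:inner}. Your belief that Theorem~\ref{Thm:inner} ``does not visibly run backwards'' is unfounded: the identity $[L_a,R_x]=(L_{x^{\lambda}a}^{-1}L_{x^{\lambda}}L_a)^{-1}$ is an equality of permutations, so $a\in N_{\mu}$ (i.e.\ $L_{x^{\lambda}}L_a=L_{x^{\lambda}a}$ for all $x$) instantly gives $[L_a,R_x]=\id_Q$, hence $a\in N_{\lambda}$, with no need for identity~(8). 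In fact your rearrangement of identity~(8), $L_x^{-1}L_{xy}=R_{x^{\rho}}^{-1}L_yR_{x^{\rho}}$, is exactly \eqref{Eq:normal-tmp2} after swapping the roles of $x$ and $y$, which in turn is the source of the second commutator formula in Theorem~\ref{Thm:inner}; so the ``different'' tool you reach for is the same formula in disguise. Likewise the passage through $Q^{\mathrm{op}}$ is unnecessary: the second equality in Theorem~\ref{Thm:inner}, $[L_y,R_x]=R_{xy^{\rho}}^{-1}R_{y^{\rho}}R_x$, handles $N_{\mu}=N_{\rho}$ directly. So your proof works, but it is a longer walk around a block the paper crosses in one step.
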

\begin{proof}
  The left nucleus of a loop $Q$ is the fixed point set of $\inn_{\lambda}(Q)$,
  the right nucleus is the fixed point set of $\inn_{\rho}(Q)$, and the middle
  nucleus is the fixed point set of $\langle [L_x, R_y]\midq x,y\in Q\rangle$.
  Thus the equality of the nuclei follows from Theorem \ref{Thm:inner}.
  The normality follows from Theorem \ref{Thm:normal} and Lemma \ref{Lem:nucnorm}.
\end{proof}

Call an Osborn loop \emph{proper} if it is neither conjugacy closed nor Moufang.
By exhaustive computer search, there are no proper Osborn loops up through order $13$.
The smallest known ones have order $16$,
and again by exhaustive computer search, they are the smallest which have
nontrivial nucleus. There are two of them, up to isomorphism. Each is a $G$-loop
and each contains the dihedral group $D_8$ as a subloop. Each has center
of order $2$, coinciding with the nucleus, and the factor by the center is
a nonassociative WIP CC-loop of order $8$. Each is nilpotent of class $3$;
the second center is a copy of $\mathbb{Z}_2$ and the factor by
the second center
is $\mathbb{Z}_4$. The two loops can be distinguished equationally; one
satisfies $L_x^4 = R_x^4 = \id$ but the other does not.

\begin{lem}\label{Lem:pseudo}
  Let $Q$ be an Osborn loop. Then for each $x,y\in Q$,
  \begin{enumerate}
    \item $L_{xy}\inv L_x L_y$ is a right pseudoautomorphism with companion $y\rdv x^{\rho}\cdot (xy)^{\rho}$;
    \item $R_{yx}\inv R_x R_y$ is a left pseudoautomorphism with companion $(yx)^{\lambda}\cdot x^{\lambda}\ldv y$.
  \end{enumerate}
\end{lem}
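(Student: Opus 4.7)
The plan is to exploit the general fact, recalled just before the lemma, that an autotopism $(\alpha,\beta,\gamma)\in\atp(Q)$ whose first component fixes $1$ is automatically a right-pseudoautomorphism autotopism $(\alpha, R_c\alpha, R_c\alpha)$ with $c=\beta(1)$, and dually, if $\beta(1)=1$ then $\alpha=\gamma=L_c\beta$ with $c=\alpha(1)$. Thus for part (1) it suffices to exhibit an autotopism whose first component is $L_{xy}\inv L_xL_y$, and for part (2) one whose second component is $R_{yx}\inv R_xR_y$. In both cases the candidate will be a three-fold product of the basic Osborn autotopisms $\psi_u=(L_{u^{\lambda}}\inv,R_u,L_uR_u)$ from Lemma \ref{Lem:alpha} and of the inverses $\psi_{u^{\rho}}\inv=(L_u,R_{u^{\rho}}\inv,R_uL_u)$ from \eqref{Eq:psiI}.

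For part (1), I would take
\[
\Phi:=\psi_{(xy)^{\rho}}\,\psi_{x^{\rho}}\inv\,\psi_{y^{\rho}}\inv\in\atp(Q).
\]
Using the general loop identity $((xy)^{\rho})^{\lambda}=xy$, the first component of $\Phi$ works out to $L_{xy}\inv\cdot L_x\cdot L_y$, which plainly fixes $1$. Hence $\Phi$ is a right-pseudoautomorphism autotopism, and its companion equals the value at $1$ of the second component $R_{(xy)^{\rho}}R_{x^{\rho}}\inv R_{y^{\rho}}\inv$. Using $R_{y^{\rho}}\inv(1)=1\rdv y^{\rho}=(y^{\rho})^{\lambda}=y$, this evaluation gives $(y\rdv x^{\rho})\cdot(xy)^{\rho}$, matching the stated companion.

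For part (2), the dual construction is
\[
\Psi:=\psi_{yx}\inv\,\psi_x\,\psi_y\in\atp(Q),
\]
whose second component $R_{yx}\inv R_xR_y$ fixes $1$; by the dual fact, $\Psi$ is a left-pseudoautomorphism autotopism whose companion is the value at $1$ of the first component $L_{(yx)^{\lambda}}L_{x^{\lambda}}\inv L_{y^{\lambda}}\inv$. Using $L_{y^{\lambda}}\inv(1)=y^{\lambda}\ldv 1=y$, this evaluates to $(yx)^{\lambda}\cdot(x^{\lambda}\ldv y)$, as claimed. Alternatively, part (2) is just part (1) applied in the opposite loop, which is also Osborn. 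The only real obstacle is the bookkeeping of $\lambda/\rho$-inverses needed to select the three factors so that the internal cancellations leave exactly $L_{xy}\inv L_xL_y$ or $R_{yx}\inv R_xR_y$; once the correct product is written down the rest is routine.
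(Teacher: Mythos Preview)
Your proposal is correct and matches the paper's proof essentially verbatim: the paper also forms the autotopism $\psi_{(xy)^{\rho}}\psi_{x^{\rho}}\inv\psi_{y^{\rho}}\inv$, reads off that its first component $L_{xy}\inv L_x L_y$ fixes $1$, and evaluates the second component at $1$ to obtain the companion $y\rdv x^{\rho}\cdot (xy)^{\rho}$; part~(2) is left to duality. Your explicit handling of part~(2) via $\psi_{yx}\inv\psi_x\psi_y$ is a nice addition but not a different method.
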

\begin{proof}
  We compute the autotopism
  \[
  \psi_{(xy)^{\rho}} \psi_{x^{\rho}}\inv \psi_{y^{\rho}}\inv =
    ( L_{xy}\inv L_x L_y, R_{(xy)^{\rho}} R_{x^{\rho}}\inv R_{y^{\rho}}\inv, \omega_{x,y} )\,,
  \]
  where we do not need the particular expression for $\omega_{x,y}$. The first
  component $L_{xy}\inv L_x L_y$ fixes $1$ and hence is a right pseudoautomorphism.
  The companion is $R_{(xy)^{\rho}} R_{x^{\rho}}\inv R_{y^{\rho}}\inv(1) = y\rdv x^{\rho}\cdot (xy)^{\rho}$,
  as claimed.
\end{proof}

\begin{cor}\label{Cor:aut}
  Let $Q$ be an Osborn loop. For each $x\in Q$, $L_{x^{\lambda}} L_x = L_x L_{x^{\rho}}$
  and $R_x R_{x^{\lambda}} = R_{x^{\rho}} R_x$ are automorphisms.
\end{cor}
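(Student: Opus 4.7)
The plan is to split Corollary \ref{Cor:aut} into three tasks: (i) identifying $L_x L_{x^\rho}$ and $L_{x^\lambda} L_x$ as automorphisms, (ii) proving that these two composites coincide, and (iii) deducing the right-translation analogues by inversion (which also transfers the automorphism property).

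For (i), Lemma \ref{Lem:pseudo}(1) applies directly. The substitution $(x,y)\mapsto (x,x^\rho)$ makes $xy=1$, so $L_{xy}\inv L_x L_y$ reduces to $L_x L_{x^\rho}$, a right pseudoautomorphism whose companion $y\rdv x^\rho \cdot (xy)^\rho$ simplifies to $1$; thus it is an automorphism. The substitution $(x,y)\mapsto (x^\lambda,x)$, using $(x^\lambda)^\rho = x$ and $x^\lambda x = 1$, handles $L_{x^\lambda} L_x$ in the same way.

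For (ii), the aim is to express both sides in the common form $R_x\inv R_{x^\rho}\inv$. Lemma \ref{Lem:alpha} yields $L_{x^\lambda} = R_x\inv L_x\inv R_x$, whence $L_{x^\lambda} L_x = R_x\inv L_x\inv \cdot R_x L_x$. The mirror of Lemma \ref{Lem:alpha}, which is available because Osborn loops are closed under formation of opposites (as noted just before Theorem \ref{Thm:osb_equivs}), supplies the relations $R_x L_x = L_x R_{x^\rho}\inv$ and $L_x L_{x^\rho} = R_x\inv R_{x^\rho}\inv$. Substituting the first of these collapses the expression for $L_{x^\lambda} L_x$ to $R_x\inv R_{x^\rho}\inv$, matching the second, so $L_{x^\lambda} L_x = L_x L_{x^\rho}$.

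For (iii), the second expression for $\alpha_x$ in Lemma \ref{Lem:alpha}, namely $L_{x^\lambda}\inv = L_x R_x R_{x^\lambda}$, rearranges to $R_x R_{x^\lambda} = (L_{x^\lambda} L_x)\inv$; its mirror yields $R_{x^\rho} R_x = (L_x L_{x^\rho})\inv$. Inverting the equality from (ii) therefore gives $R_x R_{x^\lambda} = R_{x^\rho} R_x$, both automorphisms as inverses of automorphisms. The main obstacle is step (ii): while the pseudoautomorphism machinery makes the automorphism statements essentially immediate, proving the non-obvious equality of two composites built from the a priori distinct elements $x^\lambda$ and $x^\rho$ requires exploiting the multiple equivalent expressions that Lemma \ref{Lem:alpha} and its mirror provide for $\alpha_x$ and $R_{x^\rho}\inv$.
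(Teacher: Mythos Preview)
Your proof is correct, but your route to the equality $L_{x^{\lambda}}L_x = L_x L_{x^{\rho}}$ differs from the paper's. The paper first establishes that $L_{x^{\lambda}}L_x$ is an automorphism (via Lemma~\ref{Lem:pseudo}, exactly as you do), and then \emph{uses} that automorphism property to obtain the equality: applying $L_{x^{\lambda}}L_x$ to the product $x^{\rho}\cdot y$ gives $L_{x^{\lambda}}L_x L_{x^{\rho}}(y) = L_{x^{\lambda}}L_x(x^{\rho})\cdot L_{x^{\lambda}}L_x(y) = x^{\lambda}\cdot L_{x^{\lambda}}L_x(y)$, and cancelling $L_{x^{\lambda}}$ yields the result in one line. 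You instead derive the equality independently of the automorphism statement, by reducing both sides to the common expression $R_x^{-1}R_{x^{\rho}}^{-1}$ via Lemma~\ref{Lem:alpha} and its mirror. This is a perfectly valid alternative; it buys you an explicit translation identity (and, as you note in part (iii), immediately gives $R_x R_{x^{\lambda}} = (L_{x^{\lambda}}L_x)^{-1}$, so the $R$-case follows by inversion rather than by repeating the dual argument as the paper does). The paper's argument is shorter and more self-contained once the pseudoautomorphism step is in hand, while yours decouples the equality from the automorphism property and makes the translation relations more explicit.
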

\begin{proof}
  By Lemma \ref{Lem:pseudo}, $L_{x^{\lambda}} L_x$ is a left pseudoautomorphism with companion
  $x\rdv (x^{\lambda})^{\rho}\cdot (x^{\lambda}x)^{\rho} = 1$. Now $L_{x^{\lambda}}L_x L_{x^{\rho}}(y) = L_{x^{\lambda}}L_x(x^{\rho})\cdot L_{x^{\lambda}}L_x(y)
  = x^{\lambda}\cdot L_{x^{\lambda}}L_x(y) = L_{x^{\lambda}}L_{x^{\lambda}}L_x(y)$. Canceling $L_{x^{\lambda}}$, we obtain
  $L_{x^{\lambda}}L_x = L_x L_{x^{\rho}}$ as claimed. The remaining assertions follow dually.
\end{proof}

%In general, an Osborn loop does not have to satisfy any type of inverse property, not even
%WIP or its generalizations \cite{buch}. From Corollary \ref{Cor:aut} and Lemma \ref{Cor:RRLL},
%the only ``inverse-type'' property satisfied by all Osborn loops is
%\[
%L_{x^{\lambda}} L_x = (R_x R_{x^{\lambda}})\inv \in \aut(Q)\,.
%\]
%This is not unique to Osborn loops; for example, this also holds in all Buchsteiner loops \cite{buch}.

A loop $Q$ satisfies the \emph{crossed inverse property} (CIP) if $xy\cdot x^{\rho} = y$
or equivalently $x^{\lambda}\cdot yx = y$ for all $x,y\in Q$.

\begin{lem}\label{Lem:CIP}
  A CIP Osborn loop is a commutative Moufang loop.
\end{lem}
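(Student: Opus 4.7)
The plan is to exploit CIP to collapse one of the equivalent forms of the Osborn identity into a very restrictive equation. First, CIP in the form $x^{\lambda}\cdot yx = y$ says exactly that $L_{x^{\lambda}} = R_x\inv$, so that the division $x^{\lambda}\ldv y = L_{x^{\lambda}}\inv(y) = R_x(y) = yx$ for all $x,y\in Q$. Substituting this into Osborn identity (3) of \tref{Thm:osb_equivs} transforms it into
\[
yx\cdot zx = x(yz\cdot x)
\]
for all $x,y,z\in Q$.

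Setting $z=1$ in this simplified identity yields $(yx)x = x(yx)$ for all $x,y\in Q$. Since right translation by $x$ is a bijection on $Q$, the element $yx$ runs through all of $Q$ as $y$ does while $x$ remains fixed, and so $wx = xw$ for every $w,x\in Q$; that is, $Q$ is commutative. Commutativity trivially implies flexibility, so I can invoke \tref{Thm:osb_mfg} to conclude that $Q$ is a Moufang loop, and hence a commutative Moufang loop.

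I do not foresee any significant obstacle. The crucial observation is simply that CIP linearizes the left division $x^{\lambda}\ldv y$ appearing in one of the equivalent forms of the Osborn identity; once that is done, both commutativity and the Moufang property drop out in two short steps from results already established in this section.
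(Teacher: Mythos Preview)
Your proof is correct and follows essentially the same route as the paper's: both use CIP to collapse Osborn identity~(3) of \tref{Thm:osb_equivs}, deduce commutativity, and then invoke \tref{Thm:osb_mfg} via flexibility. The only difference is cosmetic: the paper computes $x^{\rho}y = x^{\rho}(xy\cdot x^{\rho}) = (x^{\rho})^{\lambda}\ldv x\cdot yx^{\rho} = yx^{\rho}$ directly, whereas you first rewrite $x^{\lambda}\ldv y$ as $yx$ and then specialize $z=1$; your version is arguably a touch cleaner.
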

\begin{proof}
  For such a loop $Q$, from $y = xy\cdot x^{\rho}$, we have $x^{\rho}y = x^{\rho}(xy\cdot x^{\rho})
  = (x^{\rho})^{\lambda}\ldv x\cdot yx^{\rho} = 1\cdot yx^{\rho} = yx^{\rho}$, and so $Q$ is commutative.
  Commutative loops are flexible, so $Q$ is Moufang by Theorem \ref{Thm:osb_mfg}.
\end{proof}

A loop is \emph{left automorphic} or a \emph{left A-loop} if $\inn_{\lambda}(Q)\leq \aut(Q)$.
Left automorphic loops form a variety because the defining condition can be expressed equationally
using the generators: $L_{xy}\inv L_x L_y(zu) = L_{xy}\inv L_x L_y(z)\cdot L_{xy}\inv L_x L_y(u)$.
\emph{Right} automorphic loops are defined dually.

\begin{thm}\label{Thm:CML}
  Let $Q$ be a left or right automorphic Osborn loop. Then $Q/N(Q)$ is a commutative Moufang loop.
\end{thm}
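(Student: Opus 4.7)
The plan is to show that the quotient $Q/N(Q)$ satisfies the crossed inverse property and then invoke Lemma \ref{Lem:CIP}. By Theorem \ref{Thm:nucleus}, the three nuclei of $Q$ coincide in a common normal subloop $N = N(Q)$, so $Q/N$ is a well-defined loop; since Osborn loops form a variety, $Q/N$ is itself an Osborn loop.

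Suppose first that $Q$ is a left A-loop. Fix $x,y\in Q$ and set $\alpha = L_{xy}^{-1} L_x L_y$. By Lemma \ref{Lem:pseudo}(1), $\alpha$ is a right pseudoautomorphism with companion $c = y\rdv x^{\rho}\cdot (xy)^{\rho}$, so $(\alpha, R_c\alpha, R_c\alpha) \in \atp(Q)$. The left automorphic hypothesis gives $\alpha \in \aut(Q)$, and hence also $(\alpha,\alpha,\alpha) \in \atp(Q)$. Dividing the first autotopism by the second componentwise in the autotopism group yields $(\id_Q, R_c, R_c) \in \atp(Q)$, which by Lemma \ref{a1}(i) must equal $\rho_a$ for some $a \in N_{\rho}$; hence $c = a \in N_{\rho} = N$.

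Reading the relation $c \in N$ in the quotient $Q/N$, we obtain $(y\rdv x^{\rho})\cdot (xy)^{\rho} = 1$, which identifies $(xy)^{\rho}$ as the right inverse of $y\rdv x^{\rho}$, i.e.~$(xy)^{\rho} = (y\rdv x^{\rho})^{\rho}$. Since the map $a\mapsto a^{\rho}$ is a bijection of any loop, we may cancel to obtain $xy = y\rdv x^{\rho}$, equivalently $xy\cdot x^{\rho}= y$. This is the CIP, and Lemma \ref{Lem:CIP} applied to the CIP Osborn loop $Q/N$ gives that $Q/N$ is a commutative Moufang loop.

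If $Q$ is right automorphic rather than left automorphic, the dual argument using Lemma \ref{Lem:pseudo}(2) shows that the companion $(yx)^{\lambda}\cdot x^{\lambda}\ldv y$ of the left pseudoautomorphism $R_{yx}^{-1}R_xR_y$ lies in $N_{\lambda}=N$, yielding the mirror CIP $x^{\lambda}\cdot yx=y$ in $Q/N$ and the same conclusion via Lemma \ref{Lem:CIP}. The substantive point is the recognition that the companion data supplied by Lemma \ref{Lem:pseudo}, once the automorphic hypothesis forces it into the nucleus, encodes exactly the CIP modulo the nucleus; after this is visible the remaining work is routine, and no deeper manipulation of Osborn identities is needed.
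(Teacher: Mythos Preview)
Your proof is correct and follows essentially the same route as the paper: use Lemma~\ref{Lem:pseudo} to identify the companion, invoke the automorphic hypothesis to force that companion into the nucleus, read the resulting relation in $Q/N(Q)$ as the CIP, and finish with Lemma~\ref{Lem:CIP}. The only difference is cosmetic: where the paper cites the standing fact that a pseudoautomorphism is an automorphism iff its companion is nuclear, you spell this out by dividing autotopisms and invoking Lemma~\ref{a1}.
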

\begin{proof}
  We prove the left case. We use Lemma \ref{Lem:pseudo} and the assumption that each
  $L_{x,y}$ is an automorphism to conclude that the companion $y\rdv x^{\rho}\cdot (xy)^{\rho}$
  lies in $N(Q)$.
  Thus in $Q/N(Q)$, the identity $y\rdv x^{\rho}\cdot (xy)^{\rho} = 1$ holds. This is equivalent to
  $xy = y\rdv x^{\rho}$ or $xy\cdot x^{\rho} = y$ for all $x,y\in Q$. By Lemma \ref{Lem:CIP}, we have the
  desired result.
\end{proof}

As a corollary, we obtain yet another proof of Basarab's CC-loop Theorem \cite{basarab-cc}.
Basarab's proof was explicated in a simplified form in \cite{kkp} and another proof was given in \cite{drapal-cc}.

\begin{cor}
  Let $Q$ be a CC-loop. Then $Q/N(Q)$ is an abelian group.
\end{cor}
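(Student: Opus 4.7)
The plan is to invoke \tref{Thm:CML} and then upgrade its conclusion from ``commutative Moufang loop'' to ``abelian group'' using the CC structure. By \cref{Cor:CC} the CC-loop $Q$ is an Osborn loop, so the only missing hypothesis of \tref{Thm:CML} is that $Q$ be left (or right) automorphic. By \lref{Lem:pseudo} the inner mapping $L_{xy}\inv L_x L_y$ is a right pseudoautomorphism with companion $c_{x,y}=(y\rdv x^\rho)(xy)^\rho$, and since the three nuclei of an Osborn loop coincide by \tref{Thm:nucleus}, it will suffice to establish $c_{x,y}\in N(Q)$: a right pseudoautomorphism whose companion lies in $N_\rho=N(Q)$ is automatically an automorphism, and so $Q$ will be left automorphic.

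The verification that $c_{x,y}\in N(Q)$ in a CC-loop is the principal step and the main obstacle. My plan here is to combine the pseudoautomorphism autotopism constructed in the proof of \lref{Lem:pseudo} with the two CC autotopisms $(T_x,L_x,L_x)$ and $(R_x,T_x\inv,R_x)$---these autotopisms encode that $T_x$ is a right pseudoautomorphism and $T_x\inv$ a left pseudoautomorphism, each with companion $x$, which is the autotopism formulation of the CC axioms discussed just before \cref{Cor:CC}---and to read off $c_{x,y}$ as the image of~$1$ under a product autotopism whose first two coordinates pin it to a nuclear value. This is an exercise in the autotopism calculus, of a piece with the arguments in \lref{Lem:pseudo} and \tref{Thm:normal}, but it is where the real computational work lies.

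Once $Q$ is known to be left automorphic, \tref{Thm:CML} tells us that $Q/N(Q)$ is a commutative Moufang loop. The CC property is equationally defined, so $Q/N(Q)$ is also a CC-loop; and a Moufang CC-loop is an extra loop (a Moufang loop is CC iff all squares are nuclear---a fact invoked in the proof of \pref{a5}). So $Q/N(Q)$ is a commutative extra loop.

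To finish I would cite Bruck's theorem that in every commutative Moufang loop, $x^3$ lies in the nucleus. In an extra loop, $x^2$ is also nuclear, so the nucleus of $Q/N(Q)$, which is a subgroup, contains $x=x^3\cdot(x^2)\inv$ for every $x$; it therefore exhausts $Q/N(Q)$. Thus $Q/N(Q)$ is associative, and being commutative, is an abelian group. Modulo the nuclearity of $c_{x,y}$, where the technical heart of the proof lies, everything else is a short assembly of \cref{Cor:CC}, \tref{Thm:nucleus}, \tref{Thm:CML}, \pref{a5} and Bruck's cube-in-nucleus theorem.
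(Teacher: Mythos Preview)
Your overall strategy is sound and reaches the right conclusion, but you have made the first step much harder than necessary and left it as an unproved ``main obstacle''. In fact, left automorphy of CC-loops follows in one line from the LCC autotopism you already wrote down: from $(T_x,L_x,L_x)\in\atp(Q)$ one gets
\[
(T_{xy}^{-1}T_xT_y,\ L_{xy}^{-1}L_xL_y,\ L_{xy}^{-1}L_xL_y)\in\atp(Q),
\]
and since both the first and second components fix $1$, this autotopism equals $(\alpha,\alpha,\alpha)$ for $\alpha=L_{xy}^{-1}L_xL_y$; hence $\alpha\in\aut(Q)$. There is no need to compute $c_{x,y}$ or show it is nuclear. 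The paper simply asserts ``CC-loops are left automorphic'' as a known fact and moves on.

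The second half of your argument is a genuine alternative to the paper's. After \tref{Thm:CML} gives that $Q/N(Q)$ is a commutative Moufang loop, the paper finishes by citing Kunen's theorem that a commutative CC-loop is an abelian group. You instead observe that a Moufang CC-loop is extra (so squares are nuclear), combine this with Bruck's theorem that cubes are central in commutative Moufang loops, and conclude that every element of $Q/N(Q)$ lies in its own nucleus. This is correct and self-contained (modulo Bruck), trading one external citation for another; the paper's route is shorter, while yours avoids invoking the structure theory of CC-loops beyond what is already in the paper.
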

\begin{proof}
  CC-loops are left automorphic, so by Corollary \ref{Cor:CC} and Theorem \ref{Thm:CML}, $Q/N(Q)$
  is commutative. Commutative CC-loops are abelian groups \cite{kunen}.
\end{proof}

Basarab \cite{basarab1} proved that if $Q$ is a loop such that every loop isotope is Osborn, then
$Q/N(Q)$ has WIP and hence is a generalized Moufang loop (and hence $(Q/N(Q))/N(Q/N(Q))$ is a
Moufang loops. However, he did not address the following, which is still the outstanding open
problem in the theory of Osborn loops.

\begin{prob}[\cite{kinyon-talk}]\label{Prb:isotopic}
   If $Q$ is an Osborn loop, is every loop isotopic to $Q$ an Osborn loop?
\end{prob}

\begin{prob}\label{Prb:simple}
  Does there exist a simple, proper Osborn loop?
\end{prob}

Note that an affirmative answer to Problem \ref{Prb:simple} would be a counterexample to
Problem \ref{Prb:isotopic}: if $Q$ is a simple, proper Osborn loop, then by Theorem \ref{Thm:nucleus},
$N(Q) = \{1\}$. If every loop isotopic to $Q$ is an Osborn loop, then by the discussion above,
$Q\cong Q/N(Q)$ is a generalized Moufang loop and hence a Moufang loop, contradicting the
assumption that $Q$ is proper.

\section{Loops that are both Osborn and Buchsteiner}\label{4}
As discussed in the last section, there are many equivalent ways to define Osborn loops.
We shall use the following characterization:
%\begin{equation} \label{e41}
%x((x^\lambda y)z\cdot x) = y \cdot zx.
%\end{equation}
%
%By setting $z=1$ we get $x\cdot (x^\lambda y)x = yx$, which
%can be written as $L_xR_x L_{x^\lambda} = R_x$. Thus
%\begin{equation}\label{e42}
%L_{x^\lambda}\m = R_x\m L_x R_x \text{\, if \,} x\in Q,\ Q
%\text{\, an Osborn loop.}
%\end{equation}
%
%Writing \eqref{e41} in the form $(x^\lambda y)(z/x) = (x\bs (yz))/x$
%we get the following characterization of Osborn loops:

\begin{equation}\label{e43}
Q \text{ is an Osborn loop} \ \iff \
\psi_x =(L\m_{x^\lambda},\, R_x,\, L_xR_x) \in \atp(Q)
\text{\, for all } x \in Q.
\end{equation}

Buchsteiner loops are characterized by the law
$x\bs(xy \cdot z) = (y \cdot zx)/x$. Setting $y= x$ yields
$L_x\m L_{x^2} = R_x\m L_x R_x$, while $z = x$ gives
$L_x\m R_x L_x = R_x\m R_{x^2}$. Thus
\begin{equation}\label{e44}
L_{x^2} = L_xR_x\m L_x R_x \text{\, and \,}
R_{x^2} = R_x L_x\m R_x L_x \text{\, if \,} x \in Q,
\ Q \text{\, a Buchsteiner loop.}
\end{equation}

A straightforward calculation using  \eqref{e44} yields
\begin{equation}\label{e45}
R_x^2 L_{x^2}\m L_x^2 = R_{x^2} \text{\, if \,} x \in Q,
\ Q \text{\, a Buchsteiner loop.}
\end{equation}

Both Osborn and Buchsteiner loops are known to have the
property that $N_{\lambda} = N_{\rho} = N_{\mu}$. In the following
we shall thus consider only the nucleus $N = N(Q)$.

\begin{equation}\label{e46}
L_{x^2} = L_xL\m_{x^\lambda} \text{\, if \,} x^2 \in N(Q),
\ Q \text{\, a loop.}
\end{equation}
This is true because (a) $x^2 \cdot x^\lambda = x$ since
$(x^2 \cdot x^\lambda) x = x^2 (x^\lambda \cdot x) = x^2$,
and so (b) $x^2 \cdot x^\lambda y = (x^2 \cdot x^\lambda)y = xy$.

\begin{equation}\label{e47}
L_{x^2} = L_xR_x\m L_x R_x \text{\, if \,} x^2 \in N(Q),
\ Q \text{\, an Osborn loop.}
\end{equation}
This follows from Lemma \ref{Lem:alpha} since $L\m_{x^\lambda} =
L_x\m L_{x^2}$, by \eqref{e46}.

The following fact can be obtained directly from the definition
of Buchsteiner loops and is well known:
\begin{equation}\label{e48}
Q \text{ is a Buchsteiner loop} \ \iff \
\varphi_x =(L_x,\, R_x\m,\, L_xR_x\m) \in \atp(Q)
\text{\, for all } x \in Q.
\end{equation}

\begin{thm}[Kinyon]\label{41}
Let $Q$ be a loop. Consider the following three properties:
\begin{enumerate}
\item[(a)] $Q$ is an Osborn loop;
\item[(b)] $Q$ is a Buchsteiner loop;
\item[(c)] $Q$ is a loop such that all squares are in the nucleus.
\end{enumerate}
Any two of these properties imply the third property.
\end{thm}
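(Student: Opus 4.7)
My plan is to reduce the whole theorem to the single autotopism identity
\[
\varphi_x \psi_x = \lambda_{x^2},
\]
valid whenever any two of (a), (b), (c) are in force, and then invoke the fact that $\atp(Q)$ is a group to deduce the third. The dictionary is: (a) is equivalent to $\psi_x \in \atp(Q)$ for all $x$ by \eqref{e43}; (b) is equivalent to $\varphi_x \in \atp(Q)$ for all $x$ by \eqref{e48}; and $x^2 \in N(Q)$ yields $\lambda_{x^2} \in \atp(Q)$ via \lref{a1}. Since in any Osborn loop the three nuclei coincide (\tref{Thm:nucleus}), and the same is explicitly noted in the excerpt for Buchsteiner loops, there is no subtlety between $N_\lambda$ and $N(Q)$ in any of the three cases.

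For (a)+(b) $\Rightarrow$ (c) I form $\varphi_x \psi_x$. Its middle component is $R_x^{-1} R_x = \id_Q$, so by \lref{a1}(ii) the product must be $\lambda_a$ for some $a \in N_\lambda$. Its third component is $L_x R_x^{-1} L_x R_x$, which equals $L_{x^2}$ by the Buchsteiner formula \eqref{e44}; hence $a = x^2 \in N_\lambda$, and then $x^2 \in N(Q)$ by \tref{Thm:nucleus}.

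For (a)+(c) $\Rightarrow$ (b) I compute $\lambda_{x^2} \psi_x^{-1}$, which automatically lies in $\atp(Q)$. Using \eqref{e46} the first component collapses to $L_x$, the middle is $R_x^{-1}$, and the third simplifies to $L_x R_x^{-1}$ via \eqref{e46} combined with the Osborn relation $L_{x^\lambda}^{-1} = R_x^{-1} L_x R_x$ from \lref{Lem:alpha}. Thus $\lambda_{x^2} \psi_x^{-1} = \varphi_x$ and (b) follows by \eqref{e48}. The direction (b)+(c) $\Rightarrow$ (a) is parallel: compute $\varphi_x^{-1} \lambda_{x^2}$ and note that combining \eqref{e46} with the Buchsteiner relation \eqref{e44} forces $L_{x^\lambda}^{-1} = R_x^{-1} L_x R_x$; the three components then work out to $(L_{x^\lambda}^{-1}, R_x, L_x R_x) = \psi_x$, and (a) follows by \eqref{e43}.

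The only real obstacle is bookkeeping: keeping straight which of the three translation formulas is available under which hypothesis, namely $L_{x^\lambda}^{-1} = R_x^{-1} L_x R_x$ from Osborn (\lref{Lem:alpha}), $L_{x^2} = L_x R_x^{-1} L_x R_x$ from Buchsteiner \eqref{e44}, and $L_{x^2} = L_x L_{x^\lambda}^{-1}$ from $x^2 \in N(Q)$ via \eqref{e46}. Once any two of these are in hand, the third follows, and the autotopism identity $\varphi_x \psi_x = \lambda_{x^2}$ is just the geometric shadow of that algebraic triangle.
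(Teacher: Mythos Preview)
Your proposal is correct and follows essentially the same route as the paper: form the product $\gamma_x=\varphi_x\psi_x=(L_xL_{x^\lambda}^{-1},\id_Q,L_xR_x^{-1}L_xR_x)$, use \lref{a1}(ii) for the direction (a)+(b)$\Rightarrow$(c), and for the remaining two directions identify $\gamma_x$ with $\lambda_{x^2}$ via \eqref{e46} together with either \eqref{e44} or \eqref{e47}. The only cosmetic difference is that you phrase the last two directions as computing $\lambda_{x^2}\psi_x^{-1}$ and $\varphi_x^{-1}\lambda_{x^2}$ rather than showing $\gamma_x=\lambda_{x^2}$ and cancelling; the underlying identities are identical.
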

\begin{proof} Let $\varphi_x$ and $\psi_x$ be the triples
from \eqref{e43} and \eqref{e48}, $x\in Q$. Then
\[ \gamma_x = \varphi_x\psi_x = (L_xL\m_{x^\lambda},\,\id_Q,\,
L_xR_x\m L_x R_x).\]
If (a) and (b) are true, then $\gamma_x \in \atp(Q)$, and so
$L_xR_x\m L_xR_x(1) = x^2 \in N(Q)$. For the rest we can
assume that $x^2 \in N(Q)$ for all $x \in Q$. It will
suffice to show that $\gamma_x = (L_{x^2},\id_Q,L_{x^2})$
if $Q$ is an Osborn loop or a Buchsteiner loop.
First note that $x^2 \in N(Q)$ implies $L_xL\m_{x^\lambda} = L_{x^2}$,
by \eqref{e46}. If $Q$ is Buchsteiner, then
$L_{x^2} = L_xR_x\m L_x R_x$ by \eqref{e44}. The same equality
follows from \eqref{e47} if $Q$ is Osborn.
\end{proof}

\begin{thm}[Jaiyeola]\label{42}
Let $Q$ be a loop. Consider the following three properties:
\begin{enumerate}
\item[(a)] $Q$ is an Osborn loop;
\item[(b)] $Q$ is a Buchsteiner loop;
\item[(c)] $Q$ satisfies the law $(x\cdot xy)(x^{\lambda}\cdot xz)
= x(x\cdot yz)$.
\end{enumerate}
Any two of these properties imply the third property.
\end{thm}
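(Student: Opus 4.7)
The plan is to reformulate identity (c) as an autotopism condition and then, working inside an Osborn loop, to recognize this autotopism as a product of the Buchsteiner autotopism $\varphi_x$ from \eqref{e48} and the autotopism $\psi_{x^{\rho}}^{-1}$ from \eqref{Eq:psiI}. This handles the two implications involving (a) simultaneously; the remaining implication will then follow because Buchsteiner loops are LIP, combined with \tref{41}.

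First I would note that (c) is equivalent to
\[
\eta_x := (L_x^2,\, L_{x^{\lambda}} L_x,\, L_x^2) \in \atp(Q) \quad \text{for all } x\in Q,
\]
since its left-hand side is $L_x^2(y)\cdot L_{x^{\lambda}} L_x(z)$ and its right-hand side is $L_x^2(yz)$. Next, in any Osborn loop I would establish the auxiliary identity
\[
L_{x^{\lambda}} L_x \;=\; R_x^{-1} R_{x^{\rho}}^{-1},
\]
by combining \cref{Cor:RRLL}, which gives $L_{x^{\lambda}} L_x = (R_x R_{x^{\lambda}})^{-1}$, with \cref{Cor:aut}, which gives $R_x R_{x^{\lambda}} = R_{x^{\rho}} R_x$. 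With this identity in hand, a direct multiplication of triples shows that in every Osborn loop
\[
\varphi_x \cdot \psi_{x^{\rho}}^{-1} \;=\; (L_x^2,\; R_x^{-1} R_{x^{\rho}}^{-1},\; L_x^2) \;=\; \eta_x.
\]
Since $\psi_{x^{\rho}}^{-1}$ is always an autotopism in an Osborn loop, this equality of triples shows that $\eta_x$ is an autotopism if and only if $\varphi_x$ is. Hence in any Osborn loop, (b) and (c) are equivalent, proving simultaneously (a)+(b)$\Rightarrow$(c) and (a)+(c)$\Rightarrow$(b).

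For (b)+(c)$\Rightarrow$(a), I would use that Buchsteiner loops are LIP: setting $z = 1$ in the Buchsteiner law $x\bs(xy\cdot z) = (y\cdot zx)/x$ gives $x\bs(xy) = y$, so $L_{x^{\lambda}} L_x = \id_Q$. Then (c) asserts that $\eta_x = (L_x^2,\, \id_Q,\, L_x^2)$ is an autotopism for each $x$, which by \lref{a1}(ii) forces $L_x^2 = L_{x^2}$ with $x^2 \in N_{\lambda}$. Since $N_{\lambda} = N(Q)$ in any Buchsteiner loop, every square lies in the nucleus, and \tref{41} now delivers (a).

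The step most in need of care is the auxiliary identity $L_{x^{\lambda}} L_x = R_x^{-1} R_{x^{\rho}}^{-1}$ together with the triple multiplication $\varphi_x \cdot \psi_{x^{\rho}}^{-1} = \eta_x$; once these are in place, the rest of the argument is routine bookkeeping.
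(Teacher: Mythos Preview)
Your treatment of the two implications that assume (a) is correct and is in fact cleaner than the paper's argument. The paper establishes (a)+(b)$\Rightarrow$(c) by first invoking \tref{41} to get nuclear squares and then checking via \eqref{e44}--\eqref{e47} that $\varphi_x^{-2}\delta_x=(\id_Q,R_{x^2},R_{x^2})$; your single factorization $\eta_x=\varphi_x\,\psi_{x^{\rho}}\inv$ (valid in an Osborn loop because $L_{x^{\lambda}}L_x=R_x\inv R_{x^{\rho}}\inv$) gives both directions at once without passing through nuclear squares.

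The implication (b)+(c)$\Rightarrow$(a), however, has a genuine gap. Setting $z=1$ in the Buchsteiner law $x\bs(xy\cdot z)=(y\cdot zx)/x$ yields $x\bs(xy)=(yx)/x$, i.e.\ $y=y$, a tautology in every loop; it does not give $L_{x^{\lambda}}L_x=\id_Q$. Buchsteiner loops are not LIP in general, so the step ``$\eta_x=(L_x^2,\id_Q,L_x^2)$ in a Buchsteiner loop'' is unjustified, and the rest of that paragraph collapses.

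Your own setup repairs this immediately. Under (b) and (c) both $\varphi_x$ and $\eta_x$ are autotopisms, hence so is
\[
\varphi_x\inv\,\eta_x=(L_x\inv L_x^2,\;R_x L_{x^{\lambda}}L_x,\;R_xL_x\inv L_x^2)=(L_x,\;R_x L_{x^{\lambda}}L_x,\;R_xL_x).
\]
This has the form $(L_x,\beta_x,R_xL_x)$, which is exactly the shape of $\psi_{x^{\rho}}\inv$ discussed after \eqref{Eq:psiI}; since any two components of an autotopism determine the third, $Q$ is Osborn. (Alternatively, mimic the paper: $\varphi_x^{-2}\eta_x$ has first component $\id_Q$, so by \lref{a1}(i) its third component $R_xL_x\inv R_xL_x$ evaluated at $1$ gives $x^2\in N_{\rho}=N(Q)$, and \tref{41} finishes.)
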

\begin{proof}
First note that (c) is equivalent to the assumption that
\begin{equation}\label{e49}
\delta_x = (L_x^2,\, L_{x^\lambda}L_x,\,L_x^2) \in \atp(Q) \text{\,
for all \,} x\in Q.
\end{equation}
For every $x \in Q$
\[\varphi_x^{-2} \delta_x = (\id_Q,\,R_x^2 L_{x^\lambda}L_x,\,
R_xL_x\m R_x L_x).\]
If $Q$ is both Osborn and Buchsteiner, then $(\id_Q,R_{x^2},R_{x^2})
\in \atp(Q)$, by \tref{41}. In such a loop $\delta_x \in \atp(Q)$
if $\varphi_x^{-2}\delta_x = (\id_Q,R_{x^2},R_{x^2})$. Now,
$R_{x^2} = R_xL_x\m R_x L_x$ by \eqref{e44}, while \eqref{e46}
and \eqref{e45} imply $R_x^2 L_{x^\lambda}L_x= R_x^2
L_{x^2}\m L_x^2 = R_{x^2}$. Thus (a) and (b) imply (c).

If (a) and (c) hold, then $\varphi_x^{-2}\delta_x \in \atp(Q)$,
and that yields $x^2 = R_xL_x\m R_x L_x(1) \in N(Q)$. It remains
to consider the case when both (b) and (c) are true. For that
use that
\[\psi_{x^\rho}^2 \delta_x =
(\id_Q,\, R_{x^\rho}^2 L_{x^\lambda}L_x,\, (L_{x^\rho}R_{x^\rho})^2 L_x^2).\]
We obtain that  $R_{x^\rho}^2 L_{x^\lambda}L_x(1) = (x^\rho)^2 \in N(Q)$ for
all $x\in Q$.
\end{proof}

\begin{thm}\label{43}
Let $Q$ be a loop. Consider the following three properties:
\begin{enumerate}
\item[(a)] $Q$ is a generalized Moufang loop;
\item[(b)] $Q$ is a WIP CC loop;
\item[(c)] $Q$ is a loop such that all squares are in the nucleus.
\end{enumerate}
Any two of these properties imply the third property.
\end{thm}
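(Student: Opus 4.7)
The plan is to follow the autotopism-based scheme of \tref{41} and \tref{42}: encode each of (a), (b), (c) as a family of autotopisms and combine them so that any two force the third. A preliminary simplification greatly reduces the work. Every conjugacy closed loop is an Osborn loop by \cref{Cor:CC}, so (b) implies (a) automatically. Consequently (b)$\wedge$(c)$\Rightarrow$(a) is immediate, and (a)$\wedge$(b)$\Rightarrow$(c) collapses to (b)$\Rightarrow$(c). The two substantive implications are therefore (a)$\wedge$(c)$\Rightarrow$(b) and (b)$\Rightarrow$(c).

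For (a)$\wedge$(c)$\Rightarrow$(b), suppose $Q$ is a WIP Osborn loop with $x^2 \in N(Q)$ for each $x$. The Osborn autotopism $\psi_x = (L_{x^\lambda}\inv, R_x, L_xR_x)$ is available from \eref{43}, and (c) supplies $\lambda_{x^2} = (L_{x^2}, \id_Q, L_{x^2})$ together with $\rho_{x^2} = (\id_Q, R_{x^2}, R_{x^2})$. The WIP identity $(xy)^\lambda = y^\lambda \rdv x$ translates into relations that let an $L$-translation be swapped with an inverse $R$-translation (after the appropriate inversion map). Composing $\psi_x$ with $\lambda_{x^2}\inv$ and applying \lref{Lem:alpha} (in particular $L_{x^\lambda}\inv = R_x\inv L_x R_x$) should reduce the middle component to $L_x$ and produce the LCC autotopism $(R_x\inv L_x, L_x, L_x) \in \atp(Q)$. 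Since the paper has observed that in an Osborn loop LCC is equivalent to CC, this gives (b).

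For (b)$\Rightarrow$(c), suppose $Q$ is a WIP CC loop. The LCC autotopism $\sigma_x = (R_x\inv L_x, L_x, L_x)$ and the RCC autotopism $\tau_x = (R_x, L_x\inv R_x, R_x)$ both lie in $\atp(Q)$; a direct computation shows $\sigma_x\tau_x = \psi_x$, re-deriving the Osborn structure at the autotopism level. The idea is to form products such as $\sigma_{x^\lambda}\sigma_x$ (and its mirror $\tau_{x^\rho}\tau_x$), rewrite the $L_{x^\lambda}$- and $R_{x^\rho}$-factors through the WIP identities, and thereby obtain an autotopism with $\id_Q$ in the middle slot whose outer components equal $L_{x^2}$ (and similarly one with $\id_Q$ in the left slot whose outer components equal $R_{x^2}$). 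By \lref{a1} this puts $x^2$ in both $N_\lambda$ and $N_\rho$, and by \tref{Thm:nucleus} the three nuclei of an Osborn loop coincide, so $x^2 \in N(Q)$.

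The main obstacle is the autotopism bookkeeping: unlike the IP setting used in \lref{23} and \lref{25}, the left and right inverse maps need not coincide in a general WIP loop, and each substitution of an $L$-translation by an $R$-translation must carry the correct inversion on the correct side. Once the intended composite autotopism is correctly isolated, extraction of the required pseudoautomorphism or nuclear conclusion is immediate from \lref{a1}.
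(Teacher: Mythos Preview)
Your opening reduction is sound: since every CC loop is Osborn, (b)$\Rightarrow$(a), so the only substantive implications are (a)$\wedge$(c)$\Rightarrow$(b) and (b)$\Rightarrow$(c). But you have overlooked the one external ingredient that turns the theorem into a two-line corollary of \tref{41}: in a WIP loop, being conjugacy closed is equivalent to being Buchsteiner \cite[Theorem~5.5]{nucid}. With this, and with ``generalized Moufang $=$ WIP Osborn'', the conditions (a), (b), (c) become, under the ambient WIP hypothesis (carried by either (a) or (b)), exactly the conditions ``Osborn'', ``Buchsteiner'', ``nuclear squares'' of \tref{41}. Thus (a)$\wedge$(c)$\Rightarrow$(b): Osborn $+$ nuclear squares gives Buchsteiner by \tref{41}, and WIP Buchsteiner $=$ WIP CC. And (b)$\Rightarrow$(c): WIP CC $=$ WIP Buchsteiner, and together with Osborn (which you already noted), \tref{41} yields nuclear squares. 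That is the paper's proof.

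Your alternative route via direct autotopism manipulation is only a plan (``should reduce''), and the specific step you propose does not work as stated: the middle component of $\psi_x\lambda_{x^2}^{-1}$ (in either order) is $R_x$, not anything that \lref{Lem:alpha} can turn into $L_x$, so this composite is not close to the LCC autotopism $(R_x^{-1}L_x,L_x,L_x)$. A correct direct argument would require a different combination of autotopisms together with a precise use of WIP, and you rightly flag the bookkeeping as the obstacle. There is no need to pursue it: once \cite[Theorem~5.5]{nucid} is invoked, both remaining implications are immediate from \tref{41}.
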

\begin{proof}
This is a direct consequence of \tref{41} since
a WIP loop is a CC loop if and only if it is a Buchsteiner
loop, by \cite[Theorem 5.5]{nucid}, and since a loop is a generalized
Moufang loop if and only if it is a WIP Osborn loop.
\end{proof}

Note that \tref{43} remains true if point (c) is replaced by
the point (c) of \tref{42}. Note also that by \cite[Theorem 5.5]{nucid}
a WIP LCC loop is CC, and a WIP RCC loop is also CC.


\begin{thebibliography}{9}
%
\bibitem{basarab1} Basarab, A.S.,
\emph{A class of WIP loops},
Mat. Issled. \textbf{2} (1967), vyp. 2, 3--24 (in Russian).

\bibitem{basarab68} Basarab, A.S.,
\emph{A certain class of G-loops},
Mat. Issled. \textbf{3} 1968 vyp. 2 (8), 72--77 (in Russian).

\bibitem{basarab_mfg} Basarab, A.S.,
\emph{Moufang's theorem},
Bul. Akad. \v{S}tiince RSS Moldoven. 1968 (1968), no. 1, 16–24 (in Russian).

\bibitem{basarab2} Basarab, A.S.,
\emph{Isotopy of WIP loops}
Mat. Issled. \textbf{5} (1970), vyp. 2 (16), 3--12 (in Russian).

\bibitem{basarab3} Basarab, A.S.,
\emph{The Osborn loop},
Studies in the theory of quasigroups and loops (Russian), pp. 12--18,
193. Izdat. ``\v{S}tiinca'', Kishinev, 1973.

\bibitem{basarab-cc} Basarab, A.S.,
\emph{A class of LK-loops},
Mat. Issled. No. 120, Bin. i $n$-arnye Kvazigruppy (1991), 3--7, 118 (in Russian).

\bibitem{basarab4} Basarab, A.S.,
\emph{Osborn's G-loops},
Quasigroups Related Systems \textbf{1} (1994), 51--56.

\bibitem{basarab5} Basarab, A.S.,
\emph{Generalized Moufang G-loops},
Quasigroups Related Systems \textbf{3} (1996), 1--5.

\bibitem{bk} Bates, G.E., Kiokemeister, F.,
\emph{A note on homomorphic mappings of quasigroups into multiplicative systems},
Bull.~Amer.~Math.~Soc. \textbf{54} (1948), 1180--1185.

\bibitem{Belousov} Belousov, V.D.
Foundations of the Theory of Quasigroups and Loops,
Nauka, Moscow, 1967 (in Russian).

\bibitem{bruck} Bruck, R.H.,
\emph{Contributions to the theory of loops},
Trans.~Amer.~Math.~Soc. \textbf{60} (1946), 245--354.

\bibitem{bruck_book} Bruck, R.H.,
A Survey of Binary Systems,
Springer-Verlag, 1971.

\bibitem{buch} Cs\"{o}rg\H{o}, P., Dr\'{a}pal, A., Kinyon, M.K.,
\emph{Buchsteiner loops},
Int. J. Algebra Comput. \textbf{19} (2009), 1049--1088.

\bibitem{drapal-cc} Dr\'{a}pal, A.,
\emph{Conjugacy closed loops and their multiplication groups},
J. Algebra \textbf{272} (2004), 838--850.

\bibitem{drapal-lcc} Dr\'{a}pal, A.,
\emph{On multiplication groups of left conjugacy closed loops},
Comment. Math. Univ. Carolin. \textbf{45} (2004), 223--236.

\bibitem{nucid} Dr\'apal, A., Jedli\v cka, P.,
\emph{On loop identities that can be obtained by a nuclear identification},
Eur. J. Comb. \textbf{31} (2010), 1907--1923.

\bibitem{dau} Dr\'apal, A.,
\emph{On multiplicative equivalences that are totally incompatible with division},
Algebra Univers. DOI: 10.1007/s00012-019-0605-5

\bibitem{f1} Fenyv\'es, F.,
\emph{Extra loops I.},
Publ. Math. Debrecen \textbf{15} (1968), 235--238.

\bibitem{f2} Fenyv\'es, F.,
\emph{Extra loops II. On loops with identities of Bol-Moufang type},
Publ. Math. Debrecen \textbf{16} (1969), 187--192.

\bibitem{gr} Goodaire, E.G., Robinson, D.A.,
\emph{Some special conjugacy closed loops},
Canad. Math. Bull. \textbf{33} (1990), no. 1, 73--78.

\bibitem{hruza} Hr{\.{u}}za, B.
\emph{Sur quelques propri\'{e}t\'{e}s des inverse-faibles},
Kni\v{z}nice Odborn. V\v{e}d. Spis{\.{u}} Vysok\'{e}. U\v{c}en\'{i} Tech. v Brn\v{e}
\textbf{B-56} (1975), 101--107 (in French).

\bibitem{Huthnance} Huthnance, E.D,
\emph{A theory of generalized Moufang loops},
Thesis (Ph.D.)--Georgia Institute of Technology. 1969. 95 pp, ProQuest LLC.

\bibitem{aj} Jaiyeola, G.T., Adeniran, J.O.,
\emph{A new characterization of Osborn-Buchsteiner loops},
Quasigroups Related Systems \textbf{20} (2012), 233--238.

\bibitem{kkp} Kinyon, M.K., Kunen, K., Phillips, J.D.,
\emph{Diassociativity in conjugacy closed loops},
Comm. Algebra \textbf{32} (2004), no. 2, 767--786.

\bibitem{kinyon-talk} Kinyon, M.,
\emph{A survey of Osborn loops},
plenary talk at the First Mile High Conference on Loops,
Quasigroups, and Nonassociative Systems,
University of Denver, Denver, CO, 8 July 2005,
\url{https://www.cs.du.edu/~petr/milehigh/2005/kinyon_talk.pdf}.

\bibitem{kunen} Kunen, K.,
\emph{The structure of conjugacy closed loops},
Trans. Amer. Math. Soc. 352 (2000), no. 6, 2889--2911.

\bibitem{osborn} Osborn, J.M.,
\emph{Loops with the weak inverse property}
Pacific J. Math. \textbf{10} (1960), 295--304.

\bibitem{pf} Pflugfelder, H.O.:
Quasigroups and Loops: Introduction,
Heldermann, Berlin (1990).

\bibitem{pvv} Phillips, J.D., Vojt\v echovsk\'y, P.,
\emph{The varieties of loops of Bol-Moufang type},
Algebra Univers. \textbf{54} (2007), 259--271.

\bibitem{pvc} Phillips, J.D., Vojt\v echovsk\'y, P.,
\emph{C-loops: An introduction},
Publ. Math. Debrecen \textbf{68} (2006), 115--137.

\end{thebibliography}
\end{document}